\newtheorem{theorem}{Theorem}
\newtheorem{proposition}[theorem]{Proposition}
\newtheorem{corollary}[theorem]{Corollary}
\newtheorem{lemma}[theorem]{Lemma}
\newtheorem{example}[theorem]{Example}
\newtheorem{defn}[theorem]{Definition}
\theoremstyle{definition}
\newcommand{\Cat}{{{\sf Cat}}}
\newcommand{\Ass}{{{\sf Ass}}}
\newcommand{\Asshat}{{\widehat{\sf Ass}}}
\newcommand{\dl}{{\mathrm {dl}}}
\newcommand{\symm}{{\mathfrak{S}}}
\newcommand{\ZZ}{{\mathbb {Z}}}
\newcommand{\PP}{{\mathbb {P}}}
\newcommand{\QQ}{{\mathbb{Q}}}
\newcommand{\OG}{{\mathcal{OG}}}
\newcommand{\F}{{\mathcal{F}}}
\newcommand{\D}{{\mathcal{D}}}
\begin{document}

\title[Alexander duality and rational associahedra]
{Alexander duality and rational associahedra}

\author{Brendon Rhoades}
\address
{Deptartment of Mathematics \newline \indent
University of California, San Diego \newline \indent
La Jolla, CA, 92093-0112, USA}
\email{bprhoades@math.ucsd.edu}

\begin{abstract}
A recent pair of papers of Armstrong, Loehr, and Warrington \cite{ALW} and Armstrong, Williams, and the author
\cite{ARW}
initiated the systematic study of  {\em rational Catalan combinatorics} which is a generalization of
Fu\ss-Catalan combinatorics (which is in turn a generalization of classical Catalan combinatorics).  
The latter paper gave two possible models
for a rational analog of the associahedron which attach  simplicial complexes to any pair
of coprime positive integers $a < b$.  
These complexes coincide up to the Fu\ss-Catalan level of generality, but at the rational
level of generality one may be
a strict subcomplex of the other.
Verifying Conjecture 4.7 of \cite{ARW},
we prove 
that these complexes agree up to homotopy and, in fact, that one complex collapses onto the other.  
This reconciles the two competing models for rational associahedra.
As
a corollary, we get that the involution $(a < b) \longleftrightarrow (b-a < b)$ on pairs of 
coprime positive integers manifests itself topologically as Alexander duality of rational associahedra.
This collapsing and Alexander duality are new features of rational Catalan combinatorics which
are invisible at the Fu\ss-Catalan level of generality.
\end{abstract}

\keywords{simplicial complex, lattice path, associahedron, collapsing}
\maketitle

\begin{figure}[h]
\centering
\includegraphics[scale = 0.7]{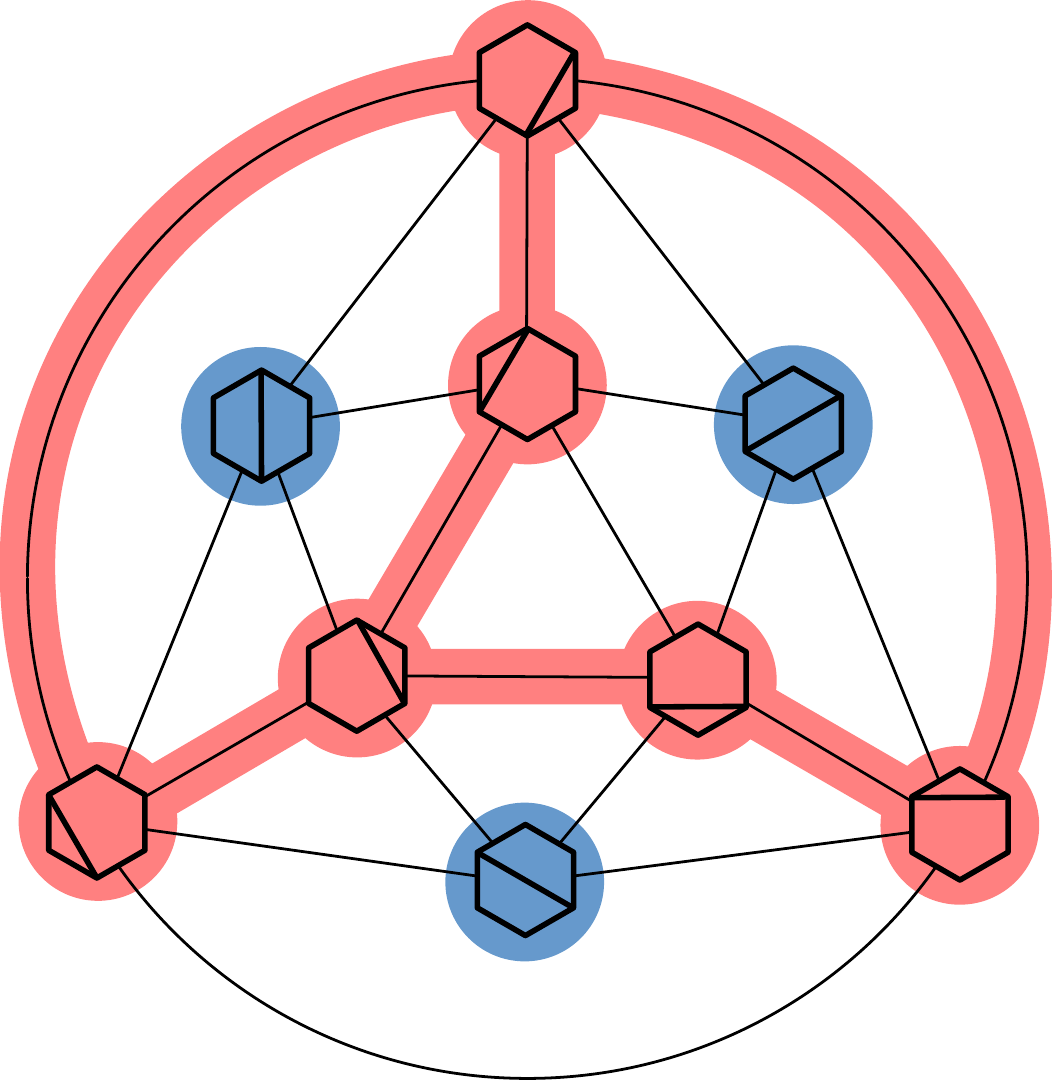}
\caption{The complexes $\Ass(3,5)$ (red) and $\Ass(2,5)$ (blue) are Alexander dual within the $2$-dimensional
sphere $\Ass(4,5)$ (black).}
\label{alexanderdual}
\end{figure}

\section{Introduction}
\label{Introduction}

For $n > 0$, let $\PP_{n+2}$ denote the regular $(n+2)$-gon.  The
\textbf{\textit{(classical) associahedron}}
is the  $(n-2)$-dimensional simplicial sphere whose faces are the dissections
of $\PP_{n+2}$.  
The polytopal dual to this complex was introduced by Stasheff
\cite{Stasheff} to study  nonassociative binary operations arising in algebraic topology.

Given a Fu$\ss$ parameter $k \in \ZZ_{> 0}$, a  dissection of 
$\PP_{kn+2}$ is called \textbf{\textit{$k$-divisible}} if the number of vertices in each sub-polygon
is congruent to $2$ modulo $k$.  Przytycki and Sikora \cite{PS} proved that the number
of $k$-divisible dissections of $\PP_{kn+2}$ with $i$ diagonals equals
$\frac{1}{n} {kn+i+1 \choose i} {n \choose i+1}$.  Using this result and 
the Fomin-Zelevinsky cluster complexes \cite{FZ} as motivation, Tzanaki 
\cite{Tzanaki} 
studied the  \textbf{\textit{generalized associahedron}}
whose faces are the $k$-divisible dissections of $\PP_{kn+2}$. 
She proved that this generalization of the associahedron is shellable.  
 As a corollary, 
the generalized associahedron is homotopy equivalent to a wedge of $k$ spheres, all
 of dimension $n-2$.

Rational Catalan combinatorics is a further generalization of Fu\ss-Catalan combinatorics which
depends on a pair of coprime positive integers $a < b$, thought of as corresponding to the rational
number $\frac{b-a}{a} \in \QQ_{> 0}$.  The choice $(a, b) = (n, n+1)$ recovers classical Catalan theory
and the choice $(a, b) = (n, kn+1)$ recovers Fu\ss-Catalan theory.  

While 
some of the objects considered in rational Catalan theory date back at least to the 1950s 
(see \cite{Bizley}),
its systematic study was initiated only recently.  In particular, 
Armstrong, Williams, and the author \cite{ARW} defined and studied rational generalizations
of Dyck paths, noncrossing and nonnesting partitions of $\{1, 2, \dots, n \}$, noncrossing perfect matchings
on $\{1, 2, \dots, 2n \}$, and the associahedron.  In a companion paper, Armstrong, Loehr, and
Warrington \cite{ALW} defined rational parking functions and studied generalizations
of the statistics `area', `dinv', and `iDes' to this setting.  

The motivation for this combinatorial 
program was Gordon's use of rational Cherednik algebras to give a generalization of the 
diagonal coinvariant ring to any reflection group $W$ \cite{Gordon}.  
The favorable representation theoretic
properties of these algebras at parameter $\frac{b}{h}$, where $h$ is the Coxeter number of $W$ and
$h < b$ are coprime, suggested the problem of studying the combinatorics of this 
`rational' case when $W = \symm_a$.  Generalizing rational Catalan combinatorics beyond
type A is almost entirely an open problem.

This paper focuses on the rational analog of the associahedron, which gives a generalization
of the classical and generalized associahedra.  We prove a conjecture of Armstrong, Williams,
and the author \cite[Conjecture 4.7]{ARW}, obtaining the conjecture \cite[Proposition 4.8]{ARW} as a corollary
(the proof of \cite[Proposition 4.8]{ARW} was given 
in \cite{ARW} as an easy consequence of
\cite[Conjecture 4.7]{ARW}).  By verifying these
conjectures, we will uncover some genuinely new features of rational Catalan combinatorics
which are invisible at the classical and Fu\ss-Catalan levels of generality.  This gives evidence that the rational
level of generality is of combinatorial interest.

Given coprime positive integers $a < b$, Armstrong, Williams, and the author \cite{ARW} defined
two simplicial complexes $\Ass(a,b)$ and $\Asshat(a,b)$ called `rational associahedra'
whose faces are certain dissections of $\PP_{b+1}$.
When $a = n$ and $b = kn+1$, the complexes
$\Ass(n, kn+1)$ and $\Asshat(n, kn+1)$ coincide and both equal the generalized associahedron
of $k$-divisible dissections of $\PP_{kn+2}$.  In general, we have that $\Ass(a,b)$ is a subcomplex of
$\Asshat(a,b)$, and the case of $(a, b) = (3, 5)$ shows that this inclusion can be strict.  The complexes
$\Asshat(3,5)$ and $\Ass(3,5)$ are shown on the left and right of Figure~\ref{retract}, respectively.

At the rational level of generality, many of the nice features of associahedra diverge between the two 
constructions $\Ass(a,b)$ and $\Asshat(a,b)$.  The complex $\Asshat(a,b)$ has a simpler definition
which is more intrinsically related to polygon dissections and 
is closed under the dihedral symmetries of $\PP_{b+1}$.  However, the complex
$\Asshat(a,b)$ is not in general pure  and there
does not appear to be a nice formula for the entires of its $f$- and $h$-vectors. 
 The complex $\Ass(a,b)$ 
has a more complicated definition involving lattice paths and
does not carry an
action of the symmetry group of $\PP_{b+1}$.  
However, the complex $\Ass(a,b)$ is
pure and shellable, and there are  nice product formulas
for the $f$- and $h$-vector entries of $\Ass(a,b)$ given by rational analogs of the Kirkman and 
Narayana numbers. 

Needless to say, having two different models for rational associahedra is unfortunate and the different
advantageous features of $\Ass(a,b)$ and $\Asshat(a,b)$ make it difficult to choose which model is ``correct".
We ameliorate this problem by proving 
that these complexes are equivalent up to combinatorial homotopy.

\begin{theorem}
\label{main} (\cite[Conjecture 4.7]{ARW})
Let $a < b$ be coprime positive integers.  The simplicial complex $\Asshat(a,b)$ collapses onto the simplicial
complex $\Ass(a,b)$.
\end{theorem}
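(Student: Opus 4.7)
The plan is to prove Theorem~\ref{main} via discrete Morse theory. I would construct an acyclic partial matching on the face poset of $\Asshat(a,b)$ in which the unmatched (``critical'') simplices are exactly the faces of $\Ass(a,b)$. By Forman's theorem, such a matching yields a sequence of elementary collapses whose net effect is precisely the collapse $\Asshat(a,b) \searrow \Ass(a,b)$ demanded by the statement. The problem is thereby reduced to the purely combinatorial task of pairing every simplex of $\Asshat(a,b)\setminus \Ass(a,b)$ with a neighbor in the same difference, and doing so in an acyclic manner.

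The first task is to pin down concretely which dissections $D$ of $\PP_{b+1}$ belong to $\Asshat(a,b)$ but fail to lie in $\Ass(a,b)$. Since $\Ass(a,b)$ is cut out of $\Asshat(a,b)$ by the lattice-path condition of \cite{ARW}, one should be able to attach to each such $D$ a canonical \emph{obstruction} diagonal $\delta(D)$: namely, scan the potential diagonals of $\PP_{b+1}$ in a fixed order dictated by the distinguished side used to set up the lattice-path construction, and let $\delta(D)$ be the first diagonal at which $D$ witnesses its failure to satisfy the $\Ass(a,b)$ condition and at which toggling presence or absence keeps the dissection in $\Asshat(a,b)\setminus\Ass(a,b)$. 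The intended matching is $D \longleftrightarrow D \triangle \{\delta(D)\}$. To check it is well defined, one has to verify that $\delta$ is stable under the toggle, i.e.\ $\delta(D)=\delta(D\triangle\{\delta(D)\})$. This should follow from a greedy design of $\delta$ in which the diagonals scanned before $\delta(D)$ contribute identically to the $\Ass(a,b)$ test for $D$ and for $D\triangle\{\delta(D)\}$, so the obstruction is determined by initial data that is unchanged under the toggle.

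The main obstacle will be proving acyclicity of the matching, that is, ruling out alternating cycles
\[
D_1 \nearrow D_1' \searrow D_2 \nearrow D_2' \searrow \cdots \searrow D_1,
\]
in which $\nearrow$ is a matching edge and $\searrow$ drops a diagonal other than the one just added. The cleanest approach is to exhibit a strict monovariant: the scan position of $\delta(D_i)$ should be weakly monotone along any such alternating path, with strict advance forced at each step, so that no nontrivial cycle can close up. Establishing this monovariant is the content-heavy step, and it will require a local analysis of how insertion or deletion of one diagonal alters the lattice-path certificate of a dissection nearby in the face poset; this is where the specific combinatorics of rational Dyck paths and the pure, shellable structure of $\Ass(a,b)$ from \cite{ARW} should enter decisively. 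Once acyclicity is in hand, discrete Morse theory immediately delivers the collapse asserted by Theorem~\ref{main}, and the Alexander duality corollary advertised in the abstract then follows by combining this collapse with the sphericity of the ambient complex $\Ass(b-1,b)$ pictured in Figure~\ref{alexanderdual}.
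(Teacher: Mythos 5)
Your plan is an outline rather than a proof, and the places you defer are exactly where the content lives. The approach itself (a Morse matching on the face poset of $\Asshat(a,b)$ whose critical cells are the faces of $\Ass(a,b)$) is legitimate and is even flagged in the paper's closing remarks as an attractive alternative formulation; the paper instead uses a sequence of elementary collapses via the cone-vertex Lemma~\ref{cone-vertex-lemma}, ordered along a total order on the edges of an explicit ``obstruction graph'' $\OG(a,b)$.

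The gaps are concrete. First, you assert that every face $F\in\Asshat(a,b)\setminus\Ass(a,b)$ has a canonical ``first obstruction'' diagonal $\delta(F)$ such that $F\triangle\{\delta(F)\}$ stays in $\Asshat(a,b)\setminus\Ass(a,b)$, but you never construct $\delta$ or show such a diagonal exists. In the paper this existence is nontrivial: it requires Proposition~\ref{flag} (that $\Ass(a,b)$ is flag, so the failure of $F$ is witnessed by a two-element obstructing pair), Lemma~\ref{same-larger-endpoint} (obstructing pairs share their larger endpoint), and Lemmas~\ref{completing-wedges} and~\ref{completing-half-wedges} (producing an $a,b$-admissible diagonal that serves as a cone vertex for the obstructing wedge). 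Without these, the proposed toggle diagonal may simply not exist. Second, for a minimal face $e$ of $\Asshat(a,b)\setminus\Ass(a,b)$ the toggle must add a diagonal, and the naive candidate (the third side of the obstructing wedge) is blocked exactly when some other admissible diagonal crosses it; the paper handles this with the auxiliary faces $F_q$ and Lemma~\ref{completing-half-wedges}, and any Morse matching will have to confront the same phenomenon. Third, you acknowledge that acyclicity is ``the content-heavy step'' and propose an unspecified monovariant; that is the theorem. Until $\delta$ is defined, stability under toggling is proved, and the monovariant is exhibited, this is a description of what a proof would look like rather than a proof.
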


Theorem~\ref{main} is trivial  up to the Fu$\ss$ level of generality because we have the
equality of complexes
$\Asshat(n, kn+1) = \Ass(n, kn+1)$; the disagreement between the complexes 
$\Asshat(a,b)$ and $\Ass(a,b)$ and the resulting nontrivial
collapsing appears only at the rational level of generality.  
In the case $(a, b) = (3, 5)$, Theorem~\ref{main}
can be proven by observing that in Figure~\ref{retract}, the complex on the right can be obtained from the complex
on the left by collapsing the top and bottom triangles.

To prove Theorem~\ref{main}, we will describe an explicit collapse of $\Asshat(a,b)$ onto 
$\Ass(a,b)$.  The proof of Theorem~\ref{main} will be given at the end of the paper after a series
of intermediate results.
The idea is
to identify the obstructions which prevent a face of $\Asshat(a,b)$ from being a face of the subcomplex
$\Ass(a,b)$.  These 
obstructions will be local in nature and can be encoded as the edges of an `obstruction graph'.
Using a well chosen total order on these edges, we can use a sequence of collapses
to eliminate these obstructions and prove Theorem~\ref{main}.

As was noted in \cite[Proposition 4.8]{ARW}, Theorem~\ref{main} has a corollary which gives a topological
relationship between different rational associahedra.  Let $S$ be a  sphere.  A topological subspace $X$
of $S$ is said to be \textbf{\textit{Alexander dual}} to the complement $S - X$.  Generalizing slightly, we also
say that two topological subspaces $X$ and $Y$ of $S$ are Alexander dual to one another if
$S-X$ deformation retracts onto $Y$ and $S-Y$ deformation retracts onto $X$.

For any $a < b$ coprime, 
the faces of the complexes $\Ass(a,b)$ and $\Asshat(a,b)$ are given by dissections of 
$\PP_{b+1}$, so either of these complexes embeds in the $(b-3)$-dimensional 
simplicial sphere given by the classical associahedron $\Ass(b-1,b)$.
We have the following  involution on increasing
pairs of coprime positive integers:
\begin{equation}
\label{integer-duality}
a < b \longleftrightarrow b-a < b.
\end{equation}
At the level of rational associahedra, this ``categorifies" to Alexander duality.

\begin{corollary} (see \cite[Proposition 4.8]{ARW})
\label{alexander}
Let $a < b$ be coprime positive integers.  
The simplicial complexes $\Asshat(a,b)$ and $\Asshat(b-a,b)$ are Alexander dual within the sphere
$\Ass(b-1,b)$.  
The simplicial complexes $\Ass(a,b)$ and $\Ass(b-a,b)$ are also Alexander
dual within the sphere $\Ass(b-1,b)$.
\end{corollary}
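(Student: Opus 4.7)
The plan is to deduce the corollary in two stages by combining Theorem~\ref{main} with an Alexander duality at the level of the $\Asshat$ complexes. In the first stage, I would establish directly that $\Asshat(a,b)$ and $\Asshat(b-a,b)$ are Alexander dual within the sphere $\Ass(b-1,b)$. Concretely, letting $V$ denote the set of diagonals of $\PP_{b+1}$ (the vertex set of $\Ass(b-1,b)$), the goal is to identify $\Asshat(b-a,b)$ with the combinatorial Alexander dual
\[
\Asshat(a,b)^{\vee} = \{\sigma \in \Ass(b-1,b) : V \sm \sigma \notin \Asshat(a,b)\}.
\]
Unwinding the definitions of the $\Asshat$ complexes in terms of residue conditions on sub-polygon vertex counts, this should reduce to a direct combinatorial check that the face condition defining $\Asshat(a,b)$ is exactly complementary, under the involution $a \leftrightarrow b-a$, to the face condition defining $\Asshat(b-a,b)$. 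Once this identification is in hand, the standard principle that combinatorial Alexander duality inside a simplicial sphere realizes topological Alexander duality delivers the first assertion of the corollary.

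The second stage uses Theorem~\ref{main} to promote the duality from the $\Asshat$ complexes to the $\Ass$ complexes. The key principle is that elementary simplicial collapses respect combinatorial Alexander duality: if $\Delta \searrow \Delta'$ is an elementary collapse with free pair $(\sigma,\tau)$ and both $\Delta, \Delta'$ are subcomplexes of a simplicial sphere $S$ with vertex set $V$, then one verifies directly from the definition of $\vee$ that $(V \sm \tau, V \sm \sigma)$ is a free pair in the combinatorial Alexander dual of $\Delta'$ and that removing this pair produces the Alexander dual of $\Delta$. Applied iteratively to the collapsing sequence from $\Asshat(a,b)$ onto $\Ass(a,b)$ furnished by Theorem~\ref{main}, this yields a collapse of $\Ass(a,b)^{\vee}$ onto $\Asshat(a,b)^{\vee} = \Asshat(b-a,b)$; composing further with the collapse of $\Asshat(b-a,b)$ onto $\Ass(b-a,b)$ (Theorem~\ref{main} with $a$ replaced by $b-a$), one concludes that $\Ass(a,b)^{\vee}$ collapses onto $\Ass(b-a,b)$. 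Since simplicial collapses induce deformation retracts of the geometric realizations, this provides the required Alexander duality between $\Ass(a,b)$ and $\Ass(b-a,b)$ inside $\Ass(b-1,b)$.

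I expect the main obstacle to be the first stage, namely the combinatorial verification that $\Asshat(b-a,b)$ really equals the Alexander dual of $\Asshat(a,b)$ in $\Ass(b-1,b)$. This requires a careful matching of the face conditions of the two $\Asshat$ complexes under complementation of diagonal sets, and depends on how precisely the residue/divisibility conditions defining $\Asshat$-faces of $\PP_{b+1}$ transform under the involution $a \leftrightarrow b-a$. Once that combinatorial input is secured, the rest of the argument is a formal consequence of the elementary-collapse-preserves-Alexander-dual lemma together with Theorem~\ref{main}.
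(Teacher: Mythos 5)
Your Stage 1 has a genuine gap. The set you call the combinatorial Alexander dual,
\[
\Asshat(a,b)^{\vee} = \{\sigma \in \Ass(b-1,b) : V \sm \sigma \notin \Asshat(a,b)\},
\]
with $V$ the set of \emph{all} diagonals of $\PP_{b+1}$, is not $\Asshat(b-a,b)$. Indeed any $\sigma \in \Ass(b-1,b)$ has at most $b-2$ diagonals, while $|V| = \binom{b+1}{2}-(b+1)$, so for $b$ of any interesting size the complement $V \sm \sigma$ is a huge collection containing many pairs of crossing diagonals and hence is never a face of $\Asshat(a,b)$. Your $\Asshat(a,b)^{\vee}$ is therefore all of $\Ass(b-1,b)$. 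The underlying problem is that the combinatorial Alexander dual you are invoking is a device for subcomplexes of the \emph{boundary of the full simplex} on $V$; the ambient sphere here is the much smaller polytopal sphere $\Ass(b-1,b)$, so that formalism does not transfer. For the same reason the free-pair bookkeeping in your Stage 2, phrased in terms of $(V\sm\tau, V\sm\sigma)$, does not make sense inside $\Ass(b-1,b)$, where $V\sm\tau$ is not even a face.

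The paper instead works at the level of vertex sets, which is both simpler and actually correct in this ambient sphere. The combinatorial input (the analogue of your ``residue check'') is that every diagonal of $\PP_{b+1}$ is $(a,b)$-admissible or $(b-a,b)$-admissible but not both, so the two vertex sets \emph{partition} the vertex set of $\Ass(b-1,b)$. Moreover both $\Asshat$ complexes are \emph{full} (vertex-induced) subcomplexes of $\Ass(b-1,b)$: by definition, a set of admissible diagonals is a face of $\Asshat(a,b)$ exactly when it is pairwise noncrossing, i.e.\ exactly when it is a face of $\Ass(b-1,b)$ supported on $a,b$-admissible diagonals. For a simplicial sphere whose vertex set is partitioned as $A \uplus B$ with $\Delta_A,\Delta_B$ the induced subcomplexes, the complement of $|\Delta_A|$ deformation retracts onto $|\Delta_B|$ by pushing barycentric coordinates off $A$; that is precisely the notion of Alexander duality the paper uses, and it gives the first assertion directly. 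The second assertion is then obtained, as in your Stage 2, by feeding Theorem~\ref{main} (and its application with $a$ replaced by $b-a$) into this $\Asshat$ duality. So the architecture of your argument is right, but you should replace the Stage~1 dualization with the vertex-partition and full-subcomplex observation, and drop the $V\sm\sigma$ formalism altogether.
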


Figure~\ref{alexanderdual} shows the complexes $\Ass(3,5)$ (in red) and $\Ass(2,5)$ (in blue)
as Alexander duals within the $2$-sphere $\Ass(4,5)$ (in black).  Since the `Catalan' 
pairs $\{(n, n+1)\}$ and the `Fu\ss-Catalan' pairs $\{(n, kn+1)\}$ are not closed under 
the above involution, Corollary~\ref{alexander} is another genuinely new feature of rational Catalan
theory.
Corollary~\ref{alexander} was proven for the $\Asshat$ complexes in \cite{ARW} using a relatively elementary
argument.  By showing that $\Ass(a,b)$ is shellable and computing its $h$-vector, it is also shown in
\cite{ARW} that $\Ass(a,b)$ is homotopy equivalent to a wedge of $\frac{1}{b}{b \choose a}$ spheres,
all of dimension $a-2$.  This implies that the homology groups of $\Ass(a,b)$ and $\Ass(b-a,b)$ have ranks
predicted by Corollary~\ref{alexander}.

Along the way of proving Theorem~\ref{main} and
Corollary~\ref{alexander}, we will also give a structural result on the complexes
$\Ass(a,b)$.  

\begin{proposition}
\label{flag}
The simplicial complex $\Ass(a,b)$ is flag.
\end{proposition}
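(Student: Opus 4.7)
The plan is to prove Proposition~\ref{flag} by showing that face membership in $\Ass(a,b)$ is determined by its $1$-skeleton, i.e., that $\Ass(a,b)$ is the clique complex of its edge graph. Since $\Ass(a,b)$ is a subcomplex of $\Asshat(a,b)$, every face is in particular a non-crossing collection of diagonals of $\PP_{b+1}$; the substantive question is whether the extra lattice-path conditions cutting $\Ass(a,b)$ out of $\Asshat(a,b)$ are detectable pairwise.

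First I would revisit the lattice-path definition of $\Ass(a,b)$ from \cite{ARW} and pin down the vertices as an explicit subset of the diagonals of $\PP_{b+1}$, specified by a simple arithmetic condition on the endpoint labels. Next I would characterize the edges: two admissible diagonals $d, d'$ form an edge of $\Ass(a,b)$ iff they are non-crossing and satisfy an explicit local compatibility, of the same type that governs the ``obstruction graph'' appearing in the proof of Theorem~\ref{main}. The key step is then a local-to-global assembly: if $F$ is a set of admissible diagonals that is pairwise compatible in the above sense, one must produce a single rational $(a,b)$-Dyck path whose associated dissection contains all of $F$. I would do this constructively, exploiting the fact that the non-crossing diagonals of $F$ decompose $\PP_{b+1}$ into sub-polygons; a Dyck path can then be assembled piece-by-piece on each sub-polygon, with pairwise compatibility ensuring that the pieces fit together consistently along the shared diagonals.

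The main obstacle is this assembly step, which is where the lattice-path structure of $\Ass(a,b)$ enters essentially: one must rule out the possibility of a ``higher-order'' obstruction, i.e., three or more diagonals that are pairwise compatible but jointly incompatible. This is essentially the same combinatorial content as showing that the obstructions arising in the proof of Theorem~\ref{main} are purely edge-local; in fact, one expects Proposition~\ref{flag} and Theorem~\ref{main} to share the same underlying combinatorial lemma about the local nature of compatibility. Once ruled out, every minimal non-face of $\Ass(a,b)$ is either a crossing pair of diagonals or a pair failing the local compatibility, hence of size $2$, establishing flagness.
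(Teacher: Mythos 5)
Your outline correctly isolates the crux of the problem---showing that if a noncrossing set $F$ of admissible diagonals fails to be a face of $\Ass(a,b)$, the failure is already visible on some pair in $F$---but it stops at exactly that point. You call the local-to-global assembly ``the main obstacle'' and suggest it follows because the relevant lemmas are ``expected'' to share the same combinatorial content, which is not a proof. The proposed mechanism, decomposing $\PP_{b+1}$ into sub-polygons along the diagonals of $F$ and assembling an $(a,b)$-Dyck path piece by piece, is also not a natural fit for the structure of $\Ass(a,b)$: a rational Dyck path is a global object constrained by the line $y = \frac{a}{b}x$, and the sub-polygons cut out by a noncrossing family of diagonals have boundary segments of varying lengths that do not inherit coprime slope data, so there is no obvious way to recurse on them or to certify that locally assembled pieces glue to a path that stays weakly above the line. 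Without that step the argument does not establish flagness.

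The paper's route is essentially the contrapositive, implemented algorithmically. It introduces the ``path building algorithm,'' which greedily constructs the valley path $\D(F)$ from $(b,a)$ backwards to $(0,0)$; when $F \notin \Ass(a,b)$ the algorithm breaks at a specific $x$-coordinate $i$. At that column one strips $F$ down to a non-redundant core $\overline{F}$ and a sub-core $\overline{\overline{F}}$, and then runs a counting argument: by coprimality and a slope computation there are exactly $\lfloor a(b-i)/b \rfloor - 1$ admissible diagonals with smaller endpoint $i$, and each valley laser of $\D(\overline{F})$ ``cuts off'' exactly $i_s$ of them, disjointly. The pigeonhole principle then forces some diagonal $ij \in F$ to collide with some valley diagonal $d(P_s) \in F$, giving the desired bad pair $\{ij, d(P_s)\} \notin \Ass(a,b)$. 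This quantitative step---the exact disjoint count of diagonals blocked by each laser---is what actually rules out the ``higher-order obstruction'' you worry about, and it has no counterpart in your sketch. To complete your proposal you would need to supply either this count or an equivalent certificate that pairwise compatibility implies the path building algorithm (or the valley-path construction of Lemma~\ref{valley-lemma}) succeeds.
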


It follows from its definition that $\Asshat(a,b)$ is also a flag simplicial complex.

The remainder of the paper is organized as follows.  In \textbf{Section~\ref{Background}} we give background
definitions related to simplicial complexes and recall the key combinatorial tool (Lemma~\ref{cone-vertex-lemma})
we will use to perform the collapsing in Theorem~\ref{main}.  
In \textbf{Section~\ref{Rational}} we review the definitions of $\Ass(a,b)$ and $\Asshat(a,b)$
from \cite{ARW}.
In \textbf{Section~\ref{Flag complexes}} we begin our analysis of the difference
$\Asshat(a,b) - \Ass(a,b)$ and prove Proposition~\ref{flag}.  
In \textbf{Section~\ref{Obstruction graphs}} we introduce the notion of an obstruction graph, which 
will serve as our gadget for keeping track of the obstructions which prevent a face of $\Asshat(a,b)$ from being
a face of $\Ass(a,b)$.
In \textbf{Section~\ref{Collapsing}} we prove Theorem~\ref{main}.  For the sake of completeness,
we also recall from \cite{ARW} how Corollary~\ref{alexander} can be deduced from 
Theorem~\ref{main}.  We make closing remarks in \textbf{Section~\ref{Open}}.

\section{Background on simplicial complexes}
\label{Background}

Let $E$ be a finite set.  A \textbf{\textit{simplicial complex}} $\Delta$ (on the \textbf{\textit{ground set}} $E$)
is a collection of subsets of $E$ such that if $F \in \Delta$ and $F' \subseteq F$, we have that 
$F' \in \Delta$.  Elements of $\Delta$ are called \textbf{\textit{faces}} and elements of the ground set
$E$ are called \textbf{\textit{vertices}}.   
A \textbf{\textit{facet}} of $\Delta$ is a maximal face of $\Delta$.  The \textbf{\textit{dimension}} $\dim(F)$ of a  face
$F \in \Delta$ is given by $\dim(F) := |F| - 1$.  
The \textbf{\textit{dimension}} $\dim(\Delta)$ of the complex $\Delta$ is the maximum
$\max \{ \dim(F) \,:\, F \in \Delta \}$ of the dimensions of its faces.
The complex $\Delta$ is called
\textbf{\textit{pure}} if all of the facets of $\Delta$ have the same dimension.

Let $\Delta$ be a simplicial complex with $\dim(\Delta) = d$.  The \textbf{\textit{$f$-vector}}
$f(\Delta) = (f_{-1}, f_0, \dots, f_d)$ is the vector obtained by letting $f_i$ be the number 
of $i$-dimensional faces of $\Delta$.  The \textbf{\textit{$h$-vector}} is the vector
$h(\Delta) = (h_0, \dots, h_d)$ defined by
$h_k = \sum_{i = 0}^k (-1)^{k-i} {d - i \choose k - i} f_{i-1}$.

Let $\Delta$ be a simplicial complex on the ground set $E$ and let $F$ be a subset of $E$ with
$|F| \geq 2$.  The set 
$F$ is called an \textbf{\textit{empty face}} if $F$ is not a face of $\Delta$, but every two-element subset
$\{v, v'\} \subseteq F$ of $F$ is a face of $\Delta$.  The complex $\Delta$ is called \textbf{\textit{flag}}
if $\Delta$ does not contain any empty faces.  If $\Delta$ is flag, then $\Delta$ is determined by
its $1$-skeleton $\{ F \in \Delta \,:\, |F| \leq 2 \}$.

Let $\Delta$ be a simplicial complex, let $F$ be a facet of $\Delta$, and let $F' \subset F$ be a face satisfying
$\dim(F') = \dim(F) - 1$.  The pair $(F, F')$ is called \textbf{\textit{free}} if $F$ is the unique face of $\Delta$ 
satisfying $F' \subset F$.
When the pair $(F, F')$ is free, the set of faces
$\Delta - \{F, F'\}$ is a subcomplex of $\Delta$ and we say that 
$\Delta - \{F, F'\}$ is obtained from $\Delta$ by an \textbf{\textit{elementary collapse}} (along the free pair 
$(F, F')$).  Topologically, this corresponds to the deformation
retraction given by crushing the facet $F$ through $F'$.  Given an arbitrary subcomplex $\Delta'$ of $\Delta$,
we say that $\Delta$ \textbf{\textit{collapses onto}} $\Delta'$ if there exists a sequence of subcomplexes
$\Delta = \Delta_1 \supset \Delta_2 \supset \dots \supset \Delta_n = \Delta'$ such that
$\Delta_{i+1}$ is obtained from $\Delta_i$ by an elementary collapse for $1 \leq i \leq n-1$.
Collapses were introduced in 1938 by Whitehead \cite{Whitehead} and give a combinatorial model
for certain deformation retractions.  Collapses are transitive in the sense that if $\Delta$ collapses onto
$\Delta'$ and $\Delta'$ collapses onto $\Delta''$, then $\Delta$ collapses onto $\Delta''$.

Let $\Delta$ be a simplicial complex on the ground set $E$ and let $F$ be a face of $\Delta$.  
We define $\Delta(F) := \{ F' \in \Delta \,:\, F \subseteq F' \}$ to be the set of faces of $\Delta$ containing $F$.  
We say that a ground set element
$c \in E - F$ which is not in $F$ is a \textbf{\textit{cone vertex for $F$ in $\Delta$}} 
if for any $F' \in \Delta(F)$ we have that $F' \cup \{c\} \in \Delta$.

If $\Delta$ is a simplicial complex and $F$ is a face of $\Delta$, define the 
\textbf{\textit{deletion}} $\dl_{\Delta}(F)$ to be the subcomplex of $\Delta$ given by
$\dl_{\Delta}(F) := \{ F' \in \Delta \,:\, F \nsubseteq F' \} = \Delta - \Delta(F)$.  More generally,
if $\Sigma$ is any set of faces of $\Delta$, define
$\dl_{\Delta}(\Sigma) := \{ F' \in \Delta \,:\, \text{$F \nsubseteq F'$ for all $F \in \Sigma$} \}
= \bigcap_{F \in \Sigma} \dl_{\Delta}(F)$.  To prove that complexes collapse onto deletions,
we will make repeated use of the following basic lemma.

\begin{lemma}
\label{cone-vertex-lemma}
Let $\Delta$ be a simplicial complex on the ground set $E$ and let $F$ be a face of $\Delta$.  Suppose that 
there exists a cone vertex $c \in E - F$ for $F$ in $\Delta$.  Then the complex $\Delta$ collapses onto the 
deletion $\dl_{\Delta}(F)$.
\end{lemma}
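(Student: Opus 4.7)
The plan is to collapse away all of $\Delta(F)$ by pairing each face $G \in \Delta(F)$ with $c \notin G$ against its companion $G \cup \{c\}$, which lies in $\Delta$ by the cone vertex hypothesis. Since $c \notin F$, the operations $G \mapsto G \cup \{c\}$ and $G \mapsto G - \{c\}$ give a bijection between the faces in $\Delta(F)$ that avoid $c$ and those that contain $c$, and removing all such pairs from $\Delta$ leaves precisely $\dl_\Delta(F)$.

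To realize this pairing through a sequence of elementary collapses, I would order the faces of $\Delta(F)$ that do not contain $c$ as $G_1, G_2, \ldots, G_k$ in weakly decreasing order of cardinality, breaking ties arbitrarily. Setting $\Delta_1 := \Delta$ and $\Delta_{i+1} := \Delta_i - \{G_i,\, G_i \cup \{c\}\}$, the goal at each step is to perform the elementary collapse along the pair $(G_i \cup \{c\},\, G_i)$ inside $\Delta_i$; note first that $G_i \cup \{c\} \in \Delta_i$ because it is a face of $\Delta$ by cone vertexhood and it cannot equal any previously removed $G_j \cup \{c\}$ (which would force $G_j = G_i$).

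The main obstacle, and essentially the only nontrivial point, is to verify that $(G_i \cup \{c\},\, G_i)$ is a free pair in $\Delta_i$, i.e., that $G_i \cup \{c\}$ is the unique face of $\Delta_i$ properly containing $G_i$. Suppose $H \in \Delta_i$ properly contains $G_i$; then $F \subseteq H$, so $H \in \Delta(F)$. If $c \in H$ and $H \neq G_i \cup \{c\}$, let $H' := H - \{c\}$. Because $c \notin F$ we have $F \subseteq H'$, so $H' \in \Delta(F)$ with $c \notin H'$ and $|H'| > |G_i|$; by the ordering, $H' = G_j$ for some $j < i$, so $H = G_j \cup \{c\}$ was already removed, a contradiction. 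If $c \notin H$, then $H$ itself is some $G_j$ with $j < i$ and has likewise been removed.

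Once this freeness claim is in hand, the successive elementary collapses transform $\Delta$ into $\Delta - \Delta(F) = \dl_\Delta(F)$, establishing the lemma by transitivity of collapses. The decreasing-cardinality ordering is doing all the work: it guarantees that, at the moment we attempt to collapse along $(G_i \cup \{c\}, G_i)$, every potential competitor face containing $G_i$ has already been accounted for either as some earlier $G_j$ or as its companion $G_j \cup \{c\}$.
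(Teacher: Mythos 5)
Your proof is correct and takes essentially the same approach as the paper: both pair each face of $\Delta(F)$ with its image under toggling $c$, and both collapse these pairs starting from the largest faces. The paper phrases the ordering as any linear extension of the containment order on the $c$-free representatives, while you use the concrete refinement by weakly decreasing cardinality; the freeness verification is the same in both cases.
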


\begin{proof}
Since $c$ is a cone vertex for $F$ in $\Delta$, there is a fixed-point free involution
$\phi: \Delta(F) \rightarrow \Delta(F)$ defined by
\begin{equation}
\phi(F') := \begin{cases} F' \cup \{c\} & \text{if $c \notin F'$,} \\
F' - \{c\} & \text{if $c \in F'$.} \end{cases}
\end{equation}
The map $\phi$ partitions $\Delta(F)$ into pairs $\{F, \phi(F)\}$.  We let $\leq$ be any total order on these
pairs such that whenever $F \subset \phi(F)$ and $F' \subset \phi(F')$ with $F \subseteq F'$, 
we have that $\{F, \phi(F)\} \leq \{F', \phi(F')\}$.  If we decompose 
$\Delta(F)$ as $\Delta(F) = \biguplus_{i = 1}^n \{F_i, \phi(F_i)\}$ with 
$\{F_1, \phi(F_1)\} > \dots > \{F_n, \phi(F_n) \}$, we have
that 
$\Delta - \{ F_1, \phi(F_1), F_2, \phi(F_2), \dots, F_i, \phi(F_i) \}$ is obtained from
$\Delta - \{ F_1, \phi(F_1), F_2, \phi(F_2), \dots, F_{i-1}, \phi(F_{i-1}) \}$ by an elementary collapse
along the free pair $(F_i, \phi(F_i))$ for all $1 \leq i \leq n$.  It follows that $\Delta$ collapses
onto $\dl_{\Delta}(F)$.
\end{proof}

\section{Rational associahedra}
\label{Rational}

Let $a < b$ be coprime positive integers.   We label the extremal
boundary \textbf{\textit{points}} of $\PP_{b+1}$ 
clockwise with $0, 1, 2, \dots, b$ (see Figure~\ref{dyckfacet}).
We call these ``points" rather than ``vertices" so that we do not get confused with the ground sets of 
our simplicial complexes.  
A \textbf{\textit{diagonal}} of $\PP_{b+1}$  is a line segment other than a side which connects two boundary 
points.  We use the shorthand $ij$ to refer to the diagonal connecting the boundary points $i$ and $j$
which satisfy $i < j$.
Two diagonals $ij$ and $km$ of $\PP_{b+1}$ are said to 
\textbf{\textit{cross}} if either $i < k < j < m$ or $k < i < m < j$.  

Following \cite{ARW}, we define a set
$S(a,b)$ of positive integers as follows:
\begin{equation}
S(a,b) := \left\{ \left\lfloor \frac{ib}{a} \right\rfloor \,:\, i = 1, 2, \dots, a-1 \right\}.
\end{equation}
Let $d$ be a diagonal of $\PP_{b+1}$ which separates $i$ boundary points from $j$ boundary
points, where $1 \leq i, j \leq b-2$.  The diagonal $d$ is said to be 
\textbf{\textit{$a,b$-admissible}} if $i, j \in S(a,b)$.

\begin{defn} \cite{ARW}
Let $a < b$ be coprime positive integers.  The complex $\Asshat(a,b)$ is the simplicial complex on the ground
set of $a,b$-admissible diagonals in $\PP_{b+1}$ whose faces are mutually noncrossing collections of 
$a,b$-admissible diagonals.
\end{defn}

\begin{figure}
\centering
\includegraphics[scale = 0.4]{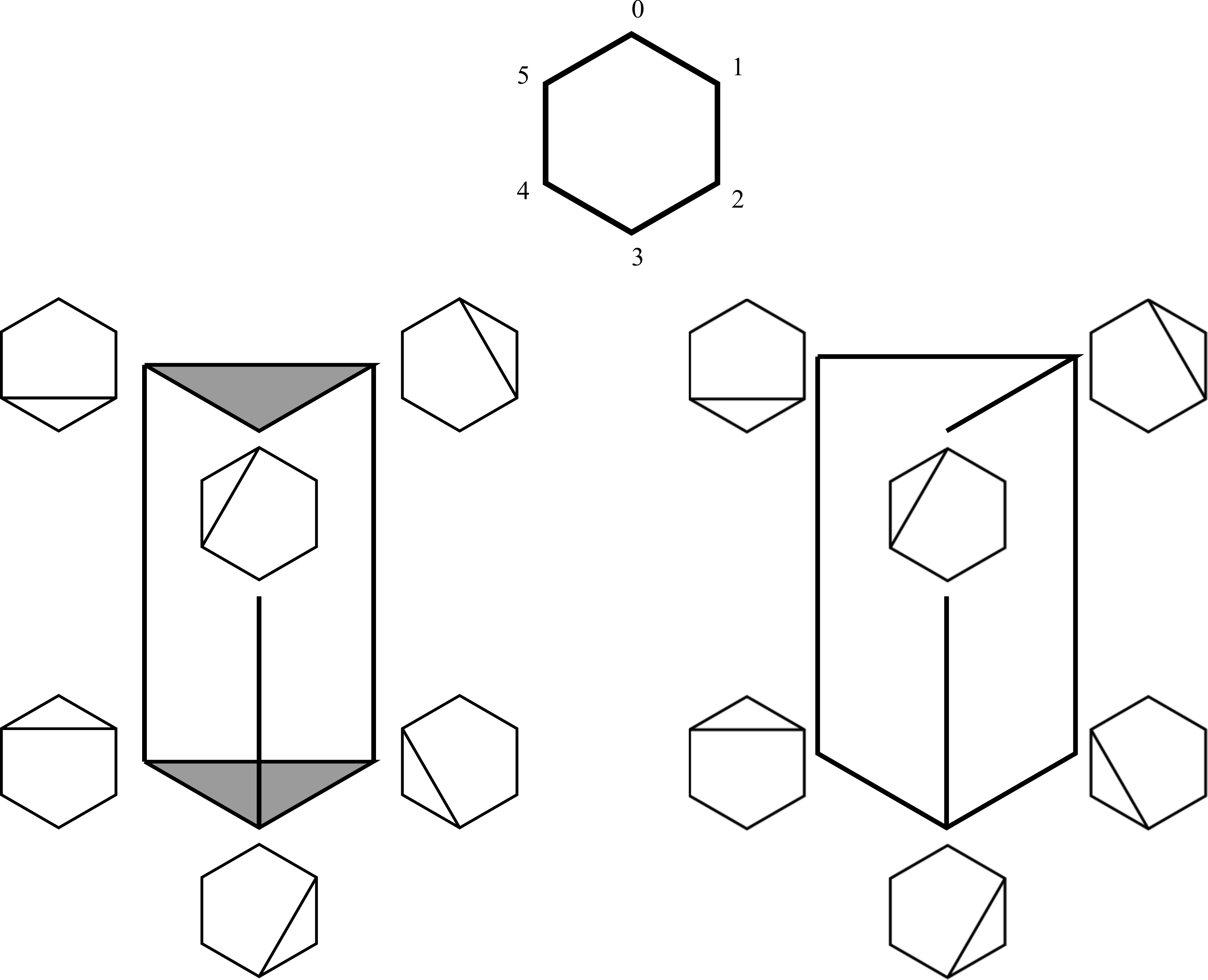}
\caption{The rational associahedra $\Asshat(3,5)$ and $\Ass(3,5)$.}
\label{retract}
\end{figure}

The complex $\Asshat(3,5)$ is shown on the left of Figure~\ref{retract}.  In general, the complex
$\Asshat(a,b)$ carries an action of the symmetry group of $\PP_{b+1}$.  Figure~\ref{retract} shows
that $\Asshat(a,b)$ need not be pure in general.
When $b = ka+1$ for $k \in \ZZ_{>0}$, we have
that $S(a, ka+1) = \{k, 2k, \dots, (a-1)k \}$ and $\Asshat(a, ka+1)$ is the complex of
$k$-divisible dissections of $\PP_{ka+2}$.

The definition of the complex $\Ass(a,b)$ is 
more involved and uses lattice paths.  An \textbf{\textit{$a,b$-Dyck path}} (or just a 
\textbf{\textit{Dyck path}} if $a$ and $b$ are clear from context) is a lattice path in $\ZZ^2$ consisting of
north and east steps which starts at $(0, 0)$, ends at $(b, a)$, and stays above the line 
$y = \frac{a}{b} x$.  A \textbf{\textit{vertical (horizontal) run}} of a Dyck path is a maximal contiguous 
sequence of east (north) steps.  A \textbf{\textit{valley}} of a Dyck path is a lattice point on the path which is
immediately preceded by an east step and immediately succeeded by a north step.  
We will sometimes write a Dyck path as a word 
of length $a + b$
in the alphabet $\{N, E\}$ consisting of a north and an
east step.

It will occasionally be convenient to think of $a,b$-Dyck paths in terms of partitions.  A 
\textbf{\textit{partition}} (of length $a$) is a sequence $\lambda = (\lambda_1 \geq \dots \geq \lambda_a)$
of weakly decreasing nonnegative integers.  We identify $\lambda$ with its
\textbf{\textit{Ferrers diagram}} which consists of $\lambda_i$ left justified boxes in row $i$.  
We let $\subseteq$ denote the partial ordering of \textbf{\textit{Young's Lattice}} given
by containment of Ferrers diagrams, so $\lambda \subseteq \mu$ if and only if $\lambda_i \leq \mu_i$
for all $i$.  An $a,b$-Dyck path $D$ cuts out a partition $\lambda(D)$ whose 
Ferrers diagram lies to the northwest of $D$.

Let $D$ be an $a,b$-Dyck path and let $P = (i, j)$ be a lattice point on $D$ other than $(0, 0)$ which lies
at the bottom of a north step of $D$.  The \textbf{\textit{laser}} 
$\ell(P) = \ell(i, j)$ of $P$ is the unique line segment of slope
$\frac{a}{b}$ whose southwest endpoint is $P$, whose northeast endpoint lies on $D$, and whose 
interior does not intersect $D$.  The northeast endpoint of $\ell(P)$ lies in the interior of an east step of $D$.
If the right endpoint of this east step has coordinates $(k, m)$, we define 
$d(P) := ik$ to be the \textbf{\textit{diagonal}} of $P$, viewed as 
a diagonal in $\PP_{b+1}$.  Since the laser $\ell(P)$ has slope $\frac{a}{b}$, the diagonal $d(P)$ is
$a,b$-admissible.  Moreover, if $P$ and $Q$ are two
bottom lattice points of north steps in $D$, we have that $d(P)$ and $d(Q)$ do not cross.  We let $\F(D)$ be the 
set
\begin{equation}
\F(D) := \{ d(P) \,:\, \text{$P$ is the lattice point at the bottom of a north step of $D$} \}.
\end{equation}
We will occasionally use the more elaborate notation $\ell_D(P)$ and $d_D(P)$ to emphasize the role
of the Dyck path $D$.
The sets $\F(D)$ as $D$ varies over  Dyck paths form the facets of $\Ass(a,b)$.

\begin{defn} \cite{ARW}
Let $a < b$ be coprime positive integers.  The complex $\Ass(a,b)$ is the simplicial complex on the 
ground set of $a,b$-admissible diagonals of $\PP_{b+1}$ whose facets are
\begin{equation}
\{ \F(D) \,:\, \text{$D$ is an $a,b$-Dyck path} \}.
\end{equation}
\end{defn}

\begin{example}
A $5,8$-Dyck path $D$ is shown in Figure~\ref{dyckfacet}.
The line $y = \frac{5}{8} x$ is dotted.
As a word in the alphabet $\{N, E\}$, the path $D$ is given by
$NNENNEEENEEEE = N^2EN^2E^3NE^4$.  The partition $\lambda(D)$ is given by
$\lambda(D) = (4,1,1,0,0)$.  The valleys of $D$ are the lattice points $(1, 2)$ and $(4, 4)$. 
Lasers have been fired from all possible lattice points of $D$.
The lasers are shown in red.  
The corresponding facet $\F(D)$ of $\Ass(5,8)$ is shown on the right as a dissection of $\PP_9$.
\end{example}

\begin{figure}
\centering
\includegraphics[scale = 0.6]{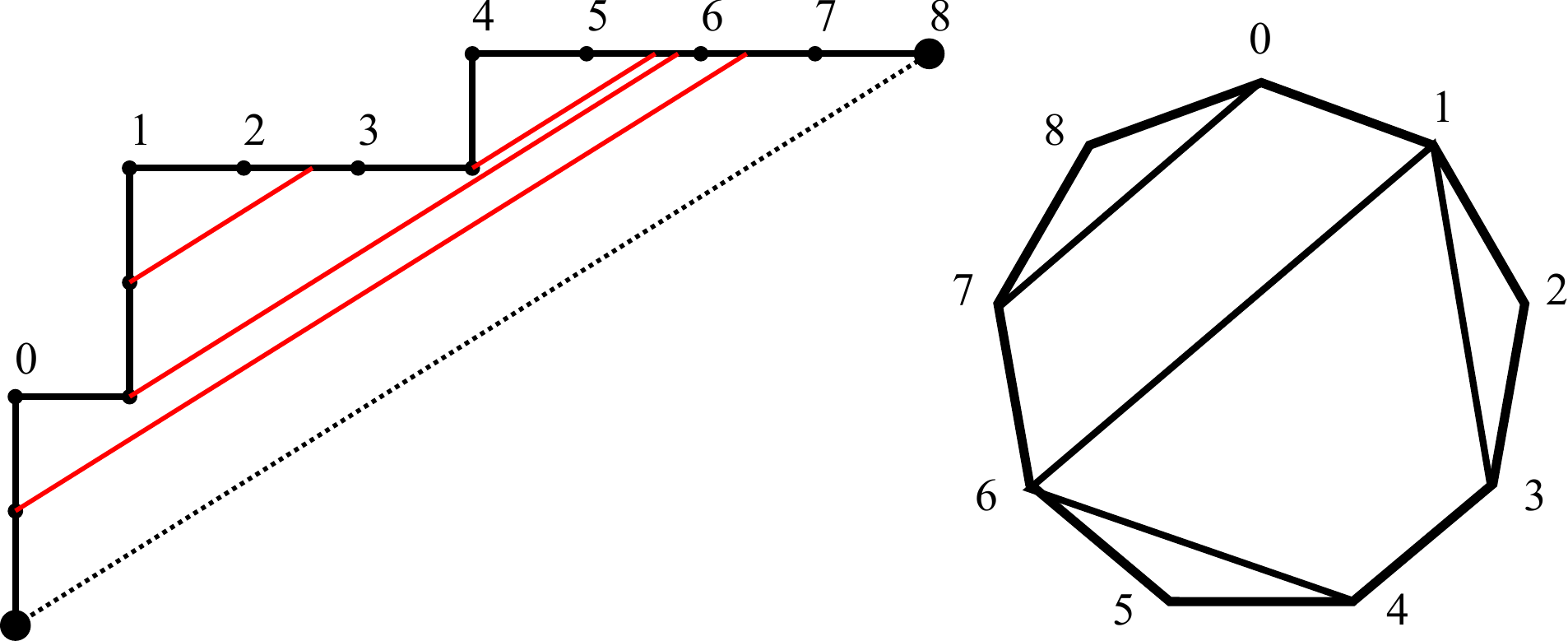}
\caption{A $5,8$-Dyck path and its facet in $\Ass(5,8)$.}
\label{dyckfacet}
\end{figure}

Since the diagonals in the facets $\F(D)$ are automatically pairwise noncrossing, we have that 
$\Ass(a,b)$ is a subcomplex of $\Asshat(a,b)$.  The complex $\Ass(a,b)$ is pure of dimension
$a-2$ and shellable.   The number of facets of $\Ass(a,b)$ is the \textbf{\textit{rational Catalan number}}
$\frac{1}{a+b}{a+b \choose a,b}$.  More generally, the number 
$f_{i-2}$ of $(i-2)$-dimensional 
faces of $\Ass(a,b)$ is given by the \textbf{\textit{rational Kirkman number}}
$\frac{1}{a} {a \choose i}{b+i-1 \choose i-1}$. 
The $h$-vector entry $h_{i-2}$ of $\Ass(a,b)$ is given by the \textbf{\textit{rational Narayana number}}
$\frac{1}{a} {a \choose i}{b-1 \choose i-1}$.
 In contrast, there is no known formula for the 
number of faces of $\Asshat(a,b)$.  
The complex $\Ass(3,5)$ is shown on the right of Figure~\ref{retract}.  

It is our aim to show that 
$\Asshat(a,b)$ collapses onto its subcomplex $\Ass(a,b)$.  At this point, it is not even clear that 
the difference $\Asshat(a,b) - \Ass(a,b)$ contains an even number of faces.

\section{Flag complexes}
\label{Flag complexes}

Let $a < b$ be coprime and let $F$ be a face of $\Asshat(a,b)$.  In order to prove that $\Asshat(a,b)$ collapses
onto $\Ass(a,b)$, we will need to develop a better understanding of when $F$ is contained in the subcomplex
$\Ass(a,b)$.  It will turn out that the obstructions preventing $F$ from being a face of $\Ass(a,b)$ can be encoded
in a graph which will give an inductive structure allowing us to apply Lemma~\ref{cone-vertex-lemma} repeatedly
to prove Theorem~\ref{main}.  The main purpose of this section is to show that 
such a graph exists by proving Proposition~\ref{flag}.

Given a face $F \in \Ass(a,b)$, there may be many Dyck paths $D$ such that $F$ is
contained in the facet $\F(D)$.  However,
it will be useful to associate a `standard' Dyck path $\D(F)$ to $F$ such 
that $F$ is contained in the facet $\F(\D(F))$.  The Dyck path $\D(F)$ will be characterized by the facts
that 
\begin{itemize}
\item
every valley of $\D(F)$ fires a laser corresponding to a diagonal in $F$, and 
\item $F \subseteq \F(\D(F))$.
\end{itemize}

\begin{lemma}
\label{valley-lemma}
Let $F$ be a nonempty
face of $\Ass(a,b)$.  There exists a unique Dyck path $\D(F)$ whose vertical runs are at the 
$x$-coordinates 
\begin{equation*}
\{ i \,:\, \text{there exists a diagonal of the form $ij$ in $F$} \}
\end{equation*}
such that the valley at the bottom of the  vertical run on the line $x = i$ for $i > 0$
has laser diagonal $i j_0$, where
$j_0$ is maximal such that $i  j_0$ is a diagonal in $F$.
\end{lemma}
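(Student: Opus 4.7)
The plan is to construct $\D(F)$ by iteratively modifying an arbitrary Dyck path whose facet contains $F$, and then to establish uniqueness by analyzing the constraints imposed by the laser condition.

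For existence, I would take any Dyck path $D$ with $F \subseteq \F(D)$; such a path exists because $F$ is a face of $\Ass(a,b)$. I then apply the following shift operation: while $D$ has a vertical run at some $x$-coordinate $j > 0$ with $j \notin I := \{i : \exists j'',\, ij'' \in F\}$, locate the subword $EN^r$ of $D$ where $N^r$ constitutes this vertical run and replace it by $N^rE$. This moves the vertical run one unit to the left, possibly merging it with an existing run. Each such shift preserves the Dyck-path property, since moving north steps earlier only raises the path above $y = ax/b$, and preserves the containment $F \subseteq \F(\cdot)$, since each diagonal of $F$ continues to be the laser image of some surviving bottom of a north step. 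After finitely many shifts, the path has vertical runs exactly at $\{0\} \cup I$; I take this to be $\D(F)$.

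For uniqueness, suppose $D_1$ and $D_2$ are two Dyck paths both satisfying the conditions of the lemma. Their vertical run $x$-coordinates agree, so their horizontal runs agree as well, and only the vertical run lengths remain to be determined. For each valley at $x = i_k$ with $k \geq 1$, the specified laser diagonal $i_k j_0(i_k)$ and the slope-$a/b$ constraint together force the height of the horizontal run containing $x \in (j_0(i_k)-1, j_0(i_k))$ to equal $y_k + \lfloor a(j_0(i_k) - i_k)/b \rfloor$, where $y_k$ is the height at the base of the $k$-th vertical run; here $\gcd(a,b) = 1$ together with the $a,b$-admissibility of $i_k j_0(i_k)$ ensures that the relevant slope interval contains a unique integer. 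These height conditions, combined with $\sum_k r_k = a$, form a triangular system that can be solved from $k = s$ backward to yield $D_1 = D_2$.

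Finally, I would verify the laser condition on the constructed path. Within the vertical run at $x = i$ (for $i \in I$ with $i > 0$), the bottom lattice points fire lasers to diagonals $ij$ with $j$ strictly decreasing as the starting point's $y$-coordinate increases; hence the valley, being the lowest such bottom, fires to the maximum $j$ with $ij \in \F(\D(F))$. Since $F \subseteq \F(\D(F))$, this maximum is at least $j_0(i)$, and the shift operation is designed precisely so that equality holds. I expect this control over laser diagonals under the shift operation---both maintaining $F \subseteq \F(D)$ at each step and preventing any laser $ij'$ with $j' > j_0(i)$ from appearing in $\F(\D(F))$---to be the main technical obstacle of the proof, requiring careful local analysis of how a slope-$a/b$ laser reroutes as a vertical run is shifted leftward.
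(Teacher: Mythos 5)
Your existence argument has a fatal gap: the shift operation $EN^r \mapsto N^r E$ does \emph{not} preserve $F \subseteq \F(\cdot)$, nor does the terminal path satisfy the laser condition. The problem is exactly the one you flagged at the end -- but it is not merely a ``technical obstacle'' to be worked out, it actually breaks the construction. When you slide a vertical run from $x = j$ to $x = j-1$, you delete the east step with right endpoint $(j, h)$ (where $h$ is the old valley height); any laser from a lattice point $P$ to the \emph{southwest} whose diagonal $d_D(P) \in F$ used to terminate on that east step (giving $d_D(P) = pj$) now sails through the vacated region and hits a different, more northeastern east step, so $d_{D'}(P) \neq d_D(P)$. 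Concretely, take $(a,b) = (3,5)$, $D = NENE^2NE^2$ (so $\F(D) = \{13, 35\}$), and $F = \{13\}$, giving $I = \{1\}$. Sliding the vertical run at $x=3$ yields $D' = NENENE^3$, for which the laser from $(1,1)$ now hits the top row at $x = 13/3$ rather than the east step ending at $(3,2)$, so $\F(D') = \{15, 24\}$ and $13 \notin \F(D')$. A second slide gives $D'' = NEN^2E^4$ with $\F(D'') = \{15, 13\}$; by luck $F \subseteq \F(D'')$, but the valley of the run at $x=1$ is $(1,1)$ with laser diagonal $15 \neq 13$, violating the lemma's laser condition (the true $\D(F)$ is $N^2ENE^4$). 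Your final paragraph asserts ``the shift operation is designed precisely so that equality holds,'' but the example shows that the slide keeps the valley \emph{low} whereas the lemma forces it to be as \emph{high} as possible; the slide cannot see this.

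The paper's construction is different in a way that matters: rather than sliding a vertical run one column west, it picks a valley $P$ whose laser diagonal is \emph{not} in $F$ and then transfers one north step from the run above $P$ to a carefully chosen run further east, thereby lifting a block of horizontal runs by one unit. The recipient run is chosen (Cases 1 and 2 of the paper's proof) precisely according to whether lasers from points west of $P$ cross east of $P$, which is what guarantees that every laser realizing a diagonal of $F$ persists (possibly shifted up by one unit) in $D'$, so $F \subseteq \F(D')$. This move strictly shrinks $\lambda(D)$, giving termination, and the terminal path automatically satisfies the laser condition: once every valley's laser is in $F$, the valley at $x=i$ necessarily fires to $ij_0$ because $j_0 = \max\{j : ij \in F\} \leq \max\{j : ij \in \F(D)\}$ with equality exactly when the latter max is realized in $F$. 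Your uniqueness argument is essentially the same as the paper's (coprimality makes the valley heights determined by the prescribed laser diagonals), but the existence half needs the paper's more delicate modification, not a leftward slide.
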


\begin{proof}
Suppose that we can construct a Dyck path $\D(F)$ which satisfies the conditions of  
the lemma.  The diagonals in $F$ determine the $x$-coordinates of the vertical runs of 
$\D(F)$.  Since $a < b$ and our lasers have slope $\frac{a}{b}$, different lasers give rise to different
diagonals.  Therefore, the condition on the valley lasers implies that $F$ determines the valleys
of $\D(F)$.  This means that $F$ determines the path $\D(F)$ and $\D(F)$ is unique when it exists.  
We will show that a path $\D(F)$ satisfying the 
conditions of the lemma exists. 

To see that such a path $\D(F)$ exists, let $D$ be any Dyck path such that $F$ is a face of $\F(D)$ (we know
that such a path $D$ exists because $F$ is a face of $\Ass(a,b)$).  
If $D$ does not satisfy the condition  of the lemma, 
we will prove that there exists another Dyck path $D'$ with 
$\lambda(D') \subset \lambda(D)$
such that $F$ is also a face of $\F(D')$.  Existence then follows by induction.  Our construction
of the path $D'$ is similar to a construction used in \cite{ARW} to prove that 
 lexicographic order on the partitions $\lambda(D)$ induces a shelling order on the facets
$\F(D)$.

Suppose that $D$ does not satisfy the condition of the lemma.
Then there exists a valley $P$ of $D$ such that 
$d(P)$ is not a diagonal in $F$.  Let $N^{i_1}, \dots, N^{i_r}$ denote the vertical runs of $D$, read from east
to west.  There exists $1 < s \leq r$ such that $P$ occurs at the bottom of the vertical run $N^{i_s}$.
There are two cases to consider.

{\bf Case 1:}  {\it There are no lattice points $Q$ on the vertical runs $N^{i_1}, \dots, N^{i_{s-1}}$ such 
that the laser $\ell(Q)$ hits $D$ east of $P$.}

In this case, let $D'$ be obtained from $D$ by increasing the length of $N^{i_1}$ by one and decreasing the
length of $N^{i_s}$ by one.  This has the effect of lifting the horizontal runs of $D$ to the west of $P$ 
by one unit.  

We claim that $F$ is contained in $\F(D')$ and that $\lambda(D') \subset \lambda(D)$.  We have
$\lambda(D') \subset \lambda(D)$  because $D'$ is constructed from $D$ by lifting a nonempty collection
of horizontal runs (namely, those between $N^{i_1}$ and $N^{i_s}$) up by one unit.  To see that 
$F$ is contained in $\F(D')$, we will observe that any laser involved in the realization of a diagonal of $F$ 
in $\F(D)$ corresponds to a laser realizing that same diagonal in $\F(D')$.  Indeed, let $R$ be a lattice
point of $D$ such that $d_D(R) \in F$.  If $R$ occurs to the northeast of $P$ in $D$, then $R$ is also a lattice
point in $D'$ and we have the equality of lasers $\ell_D(R) = \ell_{D'}(R)$ since $D$ and $D'$ agree
to the northeast of $P$.  It follows that $d_D(R) = d_{D'}(R) \in \F(D')$.  
On the other hand, if $R$ occurs to the southwest
of $P$ in $D$, then the vertical shift $R' = R + (0, 1)$ is a lattice point of $D'$ and the laser
$\ell_{D'}(R')$ is obtained by shifting the laser $\ell_D(R)$ up by one unit.  It follows that 
$d_D(R) = d_{D'}(R') \in \F(D')$.

The following diagram illustrates the construction in Case 1 when $(a, b) =  (5, 8)$.  
We have that 
$D = N^1 E^1 N^1 E^2 N^2 E^1 N^1 E^4$ and the lasers which give the face $F$ of
$\F(D)$ are shown.
The point $P$ is labeled and we have $s = 3$ and $i_s = 2$.  None of the lasers which originate
on lattice points to the west of $P$ hit $D$ to the east of $P$.  To form 
$D' = N^2 E^1 N^1 E^2 N^1 E^1 N^1 E^4$, we increase the length of the first vertical run
by one and decrease the length of the vertical run above $P$ by one.  We can vertically 
translate the lasers involved in the face $F$ to see that $F$ is also a face of $\F(D')$.
We have that 
$\lambda(D') = (4,3,1,0,0)$ and $\lambda(D) = (4,3,3,1,0)$, so that $\lambda(D') \subset \lambda(D)$.

\begin{center}
\includegraphics[scale = 0.4]{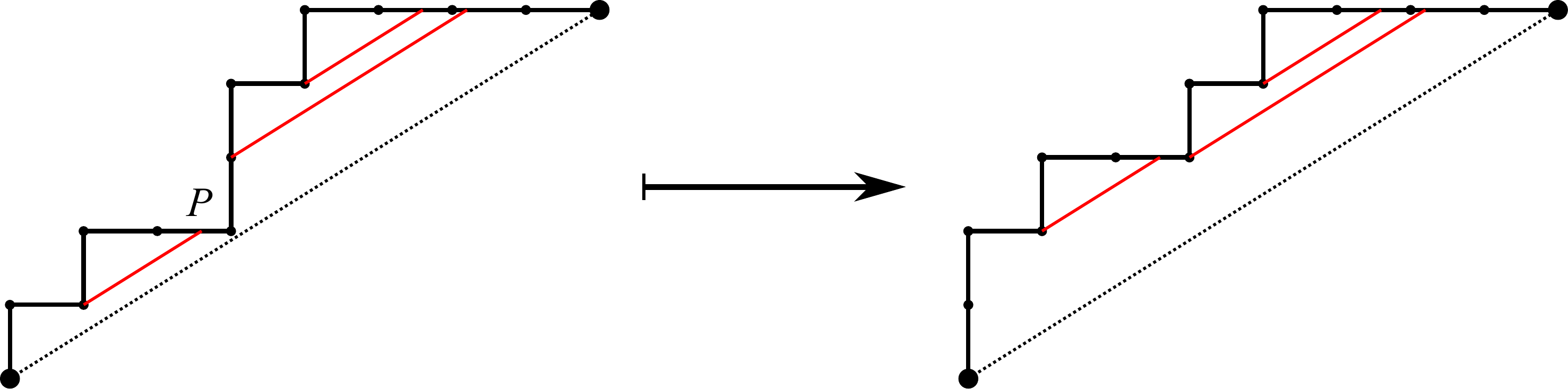}
\end{center}

{\bf Case 2:}  {\it There are some lattice points $Q$ on the vertical runs $N^{i_1}, \dots, N^{i_{s-1}}$ such that
the laser $\ell(Q)$ hits $D$ to the east of $P$.}

In this case, let $1 \leq m \leq s-1$ be maximal such that there exists a lattice point $P$ on $N^{i_m}$ such that
$\ell(P)$ hits $D$ to the east of $P$.  Form $D'$ by increasing the length of $N^{i_m}$ by one and decreasing 
the length of $N^{i_s}$ by one.  This has the effect of lifting the horizontal runs of $D$ between
$N^{i_m}$ and $P$ by one unit.  

We again claim that $F$ is contained in $\F(D')$ and that
$\lambda(D') \subset \lambda(D)$.  The argument is similar to Case 1, but slightly more complicated.
We have $\lambda(D') \subset \lambda(D)$ because $D'$ is formed from $D$ by moving the nonempty set 
of horizontal runs between $N^{i_m}$ and $N^{i_s}$ up one unit.
To see that $F$ is contained in $\F(D')$, let $R$ be a lattice point of $D$ such
that $d_D(R) \in F$.  We argue that $d_{D}(R) \in \F(D')$.    If $R$ occurs to the northeast of $P$, then $R$ is also
a lattice point of $D'$.  Since $D'$ agrees with $D$
to the northeast of $P$, we have the equality of lasers $\ell_D(R) = \ell_{D'}(R)$, so that 
$d_{D}(R) = d_{D'}(R) \in \F(D')$.
If $R$ lies to the southwest of $P$, our reasoning breaks up into further subcases.  If $R$ lies strictly east
of the vertical run $N^{i_m}$ (but southwest of $P$), then the shift 
$R' = R + (0, 1)$ is on the Dyck path $D'$.  By our choice of $m$, we know 
that the laser $\ell_D(R)$ hits $D$ to the east of $P$.  By our construction of $D'$, the laser $\ell_{D'}(R')$ is 
obtained by shifting the laser $\ell_D(R)$ up one unit, so that 
$d_D(R) = d_{D'}(R') \in \F(D')$.  
If $R$ lies strictly west of the vertical run $N^{i_m}$, then $R$ also lies on the lattice
path $D'$.  By our choice of $m$ and the fact that lasers from different lattice points do not cross, the laser
$\ell_D(R)$ hits $D$ either to the west of the vertical run $N^{i_m}$ or to the east of the valley $P$.  Since 
$D'$ is formed from $D$ by shifting $D$ up between these points, we get that
$\ell_D(R) = \ell_{D'}(R)$ in either case, so that $d_D(R) = d_{D'}(R') \in \F(D')$.  
Finally, suppose that $R$ lies on the 
vertical run $N^{i_m}$.  Depending on where on this run $R$ lies, the laser $\ell_D(R)$ may hit $D$
either to the west of $P$ or to the east of $P$.  Suppose $\ell_D(R)$ hits $D$ to the west of $P$.  
The construction of $D'$ implies that the shift $R' = R + (0, 1)$ is a lattice point on $D'$ and
that the laser $\ell_{D'}(R')$ is obtained by shifting the laser $\ell_D(R)$ up by one unit.  
It follows that $d_D(R) = d_{D'}(R') \in \F(D')$. 
Suppose that $\ell_D(R)$ hits $D$ to the east of $P$.  By the construction of $D'$ we know that 
$R$ is also a lattice point of $D'$ and that we have the equality of lasers $\ell_D(R) = \ell_{D'}(R')$.  
This means that $d_D(R) = d_D(R') \in \F(D')$.

The following diagram illustrates the construction in Case 2 when $(a, b) = (5, 8)$.  We have that 
$D = N^1 E^1 N^1 E^1 N^1 E^2 N^2 E^4$ and the lasers which give the face $F$ of $\F(D)$ are shown.
The point $P$ is labeled and we have $s = 4$ and $m = 2$.  To form $D' = N^1 E^1 N^2 E^1 N^1 E^2 N^1 E^4$,
we increase the length of the $m^{th} = 2^{nd}$ vertical run by one and decrease the length of the vertical
run above $P$ by one.  As in Case 1, we can vertically translate the lasers involved in the face $F$ to
see that $F$ is also a face of $\F(D')$.  We have that
$\lambda(D) = (4,4,2,1,0)$ and $\lambda(D') = (4,2,1,1,0)$, so
that $\lambda(D') \subset \lambda(D)$.

\begin{center}
\includegraphics[scale = 0.4]{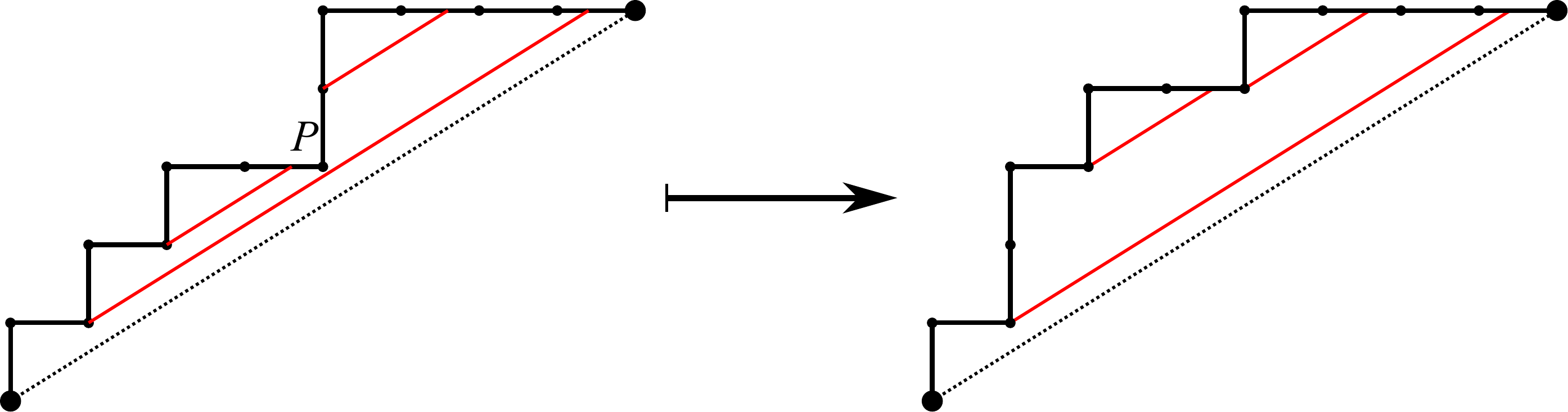}
\end{center}

\end{proof}

The path $\D(F)$ in Lemma~\ref{valley-lemma} will be called the \textbf{\textit{valley path}} of $F$. 
The path $\D(F)$ can be characterized as the unique Dyck path among those Dyck paths $D$
such that $F \subseteq \F(D)$ which minimizes $\lambda(D)$ in Young's Lattice, but we will not 
need this characterization. 
By convention, we will set the valley path of the empty face to be the path $N^a E^b$ consisting of 
$a$ north steps followed by $b$ east steps.

We return to the more general situation of a face $F \in \Asshat(a,b)$.  The following algorithm 
constructs the valley path $\D(F)$ if $F$ is a face of $\Ass(a,b)$, or returns the statement that 
$F$ is not a face of $\Ass(a,b)$.  Roughly speaking, we attempt to construct $\D(F)$ ``backwards" by 
starting with the empty path at $(b, a)$ and working our way towards $(0, 0)$.  To emphasize the 
backwards nature of this construction, we refer to the steps added to $\D(F)$ as south and west steps
rather than north and east steps.  Starting at $(b, a)$, we go west until we hit an $x$-coordinate $i$ such that
$ij$ is a diagonal in $F$ for some $j$.  We then go south until we pick up all of the diagonals in $F$ of the form
$ij$, at which point we go west again until we hit another $x$-coordinate $i'$ such that 
$i'j'$ is a diagonal in $F$ for some $j'$, etc.  If it ever happens that a diagonal in $F$ cannot be achieved by 
going south, we go south until we cross the line $y = \frac{a}{b}x$ and return the statement that $F$
is not a face of $\Ass(a,b)$.  Otherwise, we will eventually reach the origin $(0, 0)$ and recover
the path $\D(F)$ (and the fact that $F$ is a face of $\Ass(a,b)$).

PATH BUILDING ALGORITHM: \\ \indent
INPUT: A face $F$ of $\Asshat(a,b)$. \\ \indent
OUTPUT:  The valley path $\D(F)$ or the statement that 
$F$ is not a face of $\Ass(a,b)$. 
\begin{enumerate}
\item Initialize $\D(F)$ to be the empty lattice path starting and ending at $(b,a)$ and initialize $i = b$.
While $i \geq 0$, do the following.
\begin{enumerate}
\item If $i > 0$ and
there are no diagonals in $F$ of the form $i  j$, add a west step to the southwest end of $\D(F)$, so that
$\D(F)$ has southwest corner with $x$-coordinate $i-1$.
\item If $i = 0$ and there are no diagonals in $F$ of the form $0  j$, add south steps to the southwest end of 
$\D(F)$ until $\D(F)$ reaches $(0, 0)$.
\item If there are diagonals in $F$ of the form $i  j$, express these diagonals as 
$i  j_1, \dots, i  j_r$, where $j_1 < \dots < j_r$.  Add south steps to the southwest corner of $\D(F)$ until 
there exist lattice points $P$ on $\D(F)$ whose laser diagonals correspond to all of the diagonals
$i  j_1, \dots, i  j_r$ or until $\D(F)$ goes below the line $y = \frac{a}{b}x$. If $i > 0$, add 
a single west step to the southwest corner of $\D(F)$.  If $i = 0$, add south steps to the southwest
corner of $\D(F)$ until $\D(F)$ is weakly below $(0, 0)$.
\item  If $\D(F)$ lies weakly above the line $y = \frac{a}{b}x$, replace $i$ by $i-1$.  Otherwise, terminate the 
algorithm and return ``$F$ is not a face of $\Ass(a,b)$".
\end{enumerate}
\item  Return ``$F$ is contained in the facet $\F(\D(F))$ of $\Ass(a,b)$ and $\D(F)$ is the valley path of $F$".
\end{enumerate}

A couple comments on this algorithm are in order.  At the start of every iteration of the loop (1), the path $\D(F)$
starts at $(b, a)$ and ends at a point with $x$-coordinate $i$ while staying above the line $y = \frac{a}{b} x$.  
In step (1c), we consider firing lasers 
of slope $\frac{a}{b}$
to the northeast from north steps with $x$-coordinate $i$.  Since $\D(F)$ has northeast corner $(b, a)$ 
and stays above $y = \frac{a}{b} x$, the laser diagonals referenced in this step are well defined.

\begin{example}
We examine a case where the path building algorithm
does not terminate in a Dyck path.  Let $(a, b) = (5, 8)$ and consider the face
$F = \{57, 24, 05, 04\}$ of $\Asshat(5,8)$.  We initialize the lattice path $\D(F)$ to be the empty path 
at the point $(8, 5)$.  As the algorithm progresses, the lattice path $\D(F)$ changes as shown. 
For ease of reading, $x$-coordinates and laser diagonals involved in $F$ are given.  

\begin{center}
\includegraphics[scale = 0.4]{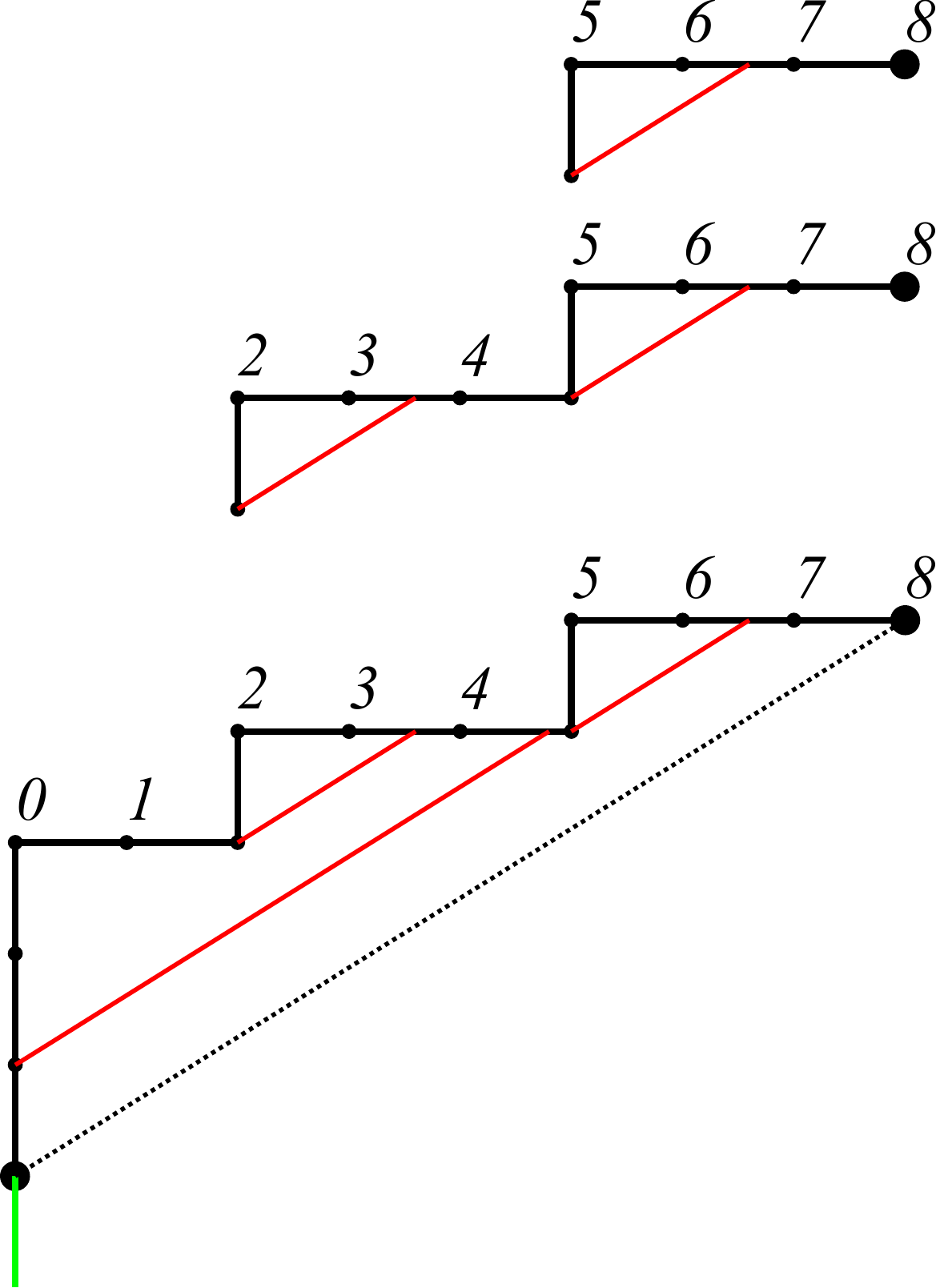}
\end{center}

We start at the lattice point $(8, 5)$ and build the path $\D(F)$ from northeast to southwest, working
west until hitting an $x$-coordinate corresponding to the smaller endpoint of at least one diagonal in
$F$ and then working south until we get every diagonal of $F$ with that smaller endpoint appearing 
as a laser (or, if this is impossible, working south until we go below the line $y = \frac{5}{8} x$).

In particular, starting at $(8, 5)$, we go west until we hit the line $x = 5$.  Then we go south one unit
to pick up the diagonal $58 \in F$.  Then we go west until we hit the line $x = 2$.  We go south one
unit to pick up the diagonal $24 \in F$.  Then we go west until we hit the line $x = 0$.  While we can
pick up the diagonal $05 \in F$ by going south, it is impossible to pick up the diagonal $04 \in F$.  As a result
of this `failure', we continue going south until we cross the line $y = \frac{5}{8} x$ (this crossing 
step is shown in green) and conclude that $F \notin \Ass(5,8)$.
\end{example}

\begin{lemma}
\label{valid}
The path building algorithm is correct. 
\end{lemma}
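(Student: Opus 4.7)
The plan is to establish two directions of correctness: (a) if the algorithm terminates successfully with a lattice path $D$, then $F \in \Ass(a,b)$ and $D$ coincides with the valley path $\D(F)$ of Lemma~\ref{valley-lemma}; and (b) if the algorithm reports failure at step (1d), then $F$ is not a face of $\Ass(a,b)$.

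Direction (a) is essentially by inspection of the termination state. When the algorithm reaches step (2), the output $D$ is a lattice path from $(0,0)$ to $(b,a)$ made of north and east steps (reading left to right), and step (1d) guarantees that $D$ stays weakly above $y = \frac{a}{b}x$, so $D$ is an $a,b$-Dyck path. Steps (1a) and (1c) together force the vertical runs of $D$ to occur exactly at the $x$-coordinates $i$ appearing as left endpoints of diagonals in $F$, and step (1c) adds south steps until every such diagonal $i j_1 < \cdots < i j_r \in F$ is realized as a laser from a lattice point at the bottom of a north step at $x = i$. Hence $F \subseteq \F(D)$. Since lasers of slope $\frac{a}{b}$ fired from lower lattice points on a common vertical run reach $D$ at larger $x$-coordinates, the valley at $x = i$ produces exactly the laser diagonal $i j_r$ with $j_r$ maximal, which is the characterization of $\D(F)$ in Lemma~\ref{valley-lemma}; the uniqueness clause of that lemma then gives $D = \D(F)$.

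For direction (b), suppose the algorithm fails when processing $x = i^{*}$, and assume for contradiction that $F \in \Ass(a,b)$, so that by Lemma~\ref{valley-lemma} the valley path $\D(F)$ exists. We prove by induction on the iteration (i.e., on $i$ descending from $b$ to $i^{*}$) the invariant that at the start of the iteration processing $x = i$, the portion of the algorithm's partial path with $x$-coordinate $\geq i$ agrees with the corresponding portion of $\D(F)$. Granting this invariant, the algorithm's step (1c) at $x = i^{*}$ constructs a path that dips strictly below $y = \frac{a}{b}x$, yet by the invariant $\D(F)$ is forced to take the same steps, contradicting the fact that $\D(F)$ is a genuine Dyck path. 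In the easy cases (1a) and (1b) of the inductive step, the vertical runs of $\D(F)$ avoid $x = i$ by Lemma~\ref{valley-lemma}, so the algorithm's single west step (or terminating south steps) matches $\D(F)$.

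The main obstacle is the inductive step at (1c). Assuming the algorithm's path east of $x = i$ coincides with $\D(F)$'s, one must show that the minimal number of south steps required to realize $i j_1, \ldots, i j_r$ as lasers from $x = i$ is precisely the length of $\D(F)$'s vertical run at $x = i$. This reduces to the laser-ordering observation used in direction (a): because the geometry east of $x = i$ is already determined, there is a unique height from which the slope-$\frac{a}{b}$ laser targets $j_r$, namely the valley height prescribed by Lemma~\ref{valley-lemma}, and the remaining heights above the valley are then forced by the requirement that the targets $j_{r-1}, \ldots, j_1$ appear. Thus the algorithm's greedy construction necessarily retraces $\D(F)$ step by step, and its failure at $i^{*}$ certifies the nonexistence of any Dyck path realizing $F$.
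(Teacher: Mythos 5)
Your proposal is correct and takes essentially the same approach as the paper: both directions rest on the observation that, when $F \in \Ass(a,b)$, the greedy backward construction must retrace the valley path $\D(F)$ of Lemma~\ref{valley-lemma}, while a successful termination obviously certifies $F \subseteq \F(\D(F))$. The paper asserts the identification of the algorithm's output with $\D(F)$ in one sentence without detailing the induction on $x$-coordinates; your inductive invariant simply makes explicit the argument the paper leaves implicit.
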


\begin{proof}
When $F \in \Ass(a,b)$, the path building algorithm constructs the unique lattice path $\D(F)$ satisfying
the conditions of Lemma~\ref{valley-lemma}.  Therefore the algorithm never returns the statement
that $F \in \Asshat(a,b)$ is not contained in $\Ass(a,b)$ when in fact $F \in \Ass(a,b)$.  On the other hand,
if $F \in \Asshat(a,b)$ and the path building algorithm terminates with a valid
$a,b$-Dyck path $\D(F)$, it is obvious that $F$ is a face of $\F(\D(F))$ so that
$F \in \Ass(a,b)$.
\end{proof}

The path building algorithm gives a constructive way to decide whether a face $F \in \Asshat(a,b)$
is contained in the subcomplex $\Ass(a,b)$. It can also be used to prove that 
$\Ass(a,b)$ is flag.

\begin{proof} (of Proposition~\ref{flag})
Let $F$ be a collection of $a,b$-admissible diagonals which is not a face of $\Ass(a,b)$.  We need to show that
there exist two diagonals $d, d' \in F$ such that $\{d, d'\}$ is not a face of $\Ass(a,b)$.  If there exist two
crossing diagonals $d, d' \in F$ this is clear, so we may assume that $F$ is a face of $\Asshat(a,b)$.

We apply the path building algorithm to $F$.  Since $F$ is not a face of $\Ass(a,b)$, the loop in step (1) of this 
algorithm must break for some value of $i$ such that $0 \leq i \leq b$.  Since for any input $F$
the path building algorithm starts by adding $\lfloor \frac{b}{a} \rfloor + 1$ west steps
to $\D(F)$, we must have that
$0 \leq i \leq b - \lfloor \frac{b}{a} \rfloor - 1$.  We call a diagonal $jk$ in $F$ \textbf{\textit{redundant}}
if any of the following conditions hold:
\begin{itemize}
\item $j < i$,
\item $j > i$ and there exists a diagonal $jk'$ in $F$ with $k' > k$, or
\item $j > i$ and there exists a diagonal $j'k'$ in $F$ with $i < j' < j$ and $k' \geq k$.
\end{itemize}
We define a subset $\overline{F}$ of $F$ by
\begin{equation}
\overline{F} := \{ d \in F \,:\, \text{$d$ is not redundant} \}.
\end{equation}
We define a further subset $\overline{\overline{F}}$ of $\overline{F}$ by
\begin{equation}
\overline{\overline{F}} := \{ d \in \overline{F} \,:\, \text{$d$ is of the form $jk$ for $j > i$} \}.
\end{equation}
It is easy to see that the loop in the path building algorithm breaks at the same value $i$ for $\overline{F}$
as it did for $F$. When this loop breaks, the  path $\D(\overline{F})$ is a 
lattice path starting at $(b,a)$ and ending at the
northernmost
lattice point of the line $x = i$ which is strictly below $y = \frac{a}{b} x$.  Reading $\D(\overline{F})$ `from left to right',
we can factor the portion of $\D(\overline{F})$ starting 
with an east step at $x = i$ into nonempty horizontal and vertical runs
as $E^{i_1} N^{i_1} \dots E^{i_r} N^{i_r} E^{i_{r+1}}$.  By the 
definition of a redundant diagonal, there are lasers fired off of this portion of $\D(\overline{F})$ only
at the valleys $P_s$ between $E^{i_s}$ and $N^{i_s}$ for $s = 1, 2, \dots, r$.  Moreover, the path building
algorithm shows that $\overline{\overline{F}}$ is contained in $\Ass(a,b)$ and that
$\D(\overline{\overline{F}})$ has the form $N^k E^i E^{i_1} N^{i_1} \dots E^{i_r} N^{i_r} E^{i_{r+1}}$.

Let $Q$ denote the westernmost
point of the horizontal step sequence $E^{i_1}$ (so that $Q$ is a lattice point on either of
$\D(\overline{F})$ or $\D(\overline{\overline{F}})$).  
Since $b > a$, by slope considerations there are a total of  
$\lfloor \frac{a(b-i)}{b} \rfloor - 1$ 
distinct
$a,b$-admissible diagonals of the form $i  j$.    
Since $Q$ is below the vertical runs $N^{i_1}, \dots, N^{i_r}$ of lengths $i_1, \dots, i_r$, the correctness
of the path building algorithm implies that
there are exactly $\lfloor \frac{a(b-i)}{b} \rfloor - 1 -  (i_1 + \dots + i_r)$ values of $j$ such that
$i  j$ is admissible and $\overline{\overline{F}} \cup \{i j\}$ is contained in $\Ass(a,b)$.  
(These values of $j$ correspond to the lattice points on the interior of the vertical segment between
$Q$ and the line $y = \frac{a}{b}x$.)

On the other hand, by 
coprimality, for $s = 1, 2, \dots, r$ the laser emanating from the valley $P_s$ gives exactly $i_s$
$a,b$-admissible diagonals of the form $i  j$ such that $\overline{\overline{F}} \cup \{i  j\}$ is not contained
in $\Ass(a,b)$.  
To see this, consider the vertical line segment $L_s$ of length $i_s$ underlying the vertical run $N^{i_s}$
above the valley $P_s$. 
Let $L'_s$ be the vertical line segment on $x = i$ obtained by
translating the points of $L_s$ southwest along a line of slope $\frac{a}{b}$.
By coprimality, the interior of the line 
segment $L'_s$ contains exactly $i_s$ lattice points $P$ on the line $x = i$.  
The diagonals $d_{\D(\overline{\overline{F}})}(P)$ of these points (constructed with respect to the lattice path
$\D(\overline{\overline{F}})$)
cannot be added to $\overline{\overline{F}}$ to get a face in $\Ass(a,b)$.  Graphically, the laser
$\ell(P_s)$ `cuts off' these diagonals.
By the definition of redundant diagonals, if $s \neq s'$ are the indices of two different valleys, the two sets of 
diagonals cut off by $\ell(P_s)$ and $\ell(P_{s'})$ are disjoint.  This means that the set of all lasers
$\ell(P_s)$, as $P_s$ varies over the valleys of $\D(\overline{F})$, cut off $i_1 + \cdots + i_r$ diagonals
of the form $ij$.

 By the pigeonhole principle and the reasoning of the last two paragraphs,
 if $\overline{\overline{F}} \cup \{ij \}$ is in the complement 
$\Asshat(a,b) - \Ass(a,b)$ for some diagonal $i  j$ in $F$, there exists $1 \leq s \leq r$ such that the laser
$\ell(P_s)$ cuts off the diagonal $ij$.  It is straightforward
 to check using the 
 path building algorithm that the $2$-element subset $\{ i  j, d(P_s) \}$ of $F$ is not contained in $\Ass(a,b)$,
 so $F$ is not an empty face.
 \end{proof}
 
 \begin{example}
 As an example of the argument in the proof of Proposition~\ref{flag}, we let
 $(a, b) = (5,8)$ and $F = \{57, 48, 24, 04\}$.  Since the diagonals in $F$ do not cross,
 we have that $F$ is a face of $\Asshat(5,8)$.  To check whether $F$ is a face of $\Ass(5,8)$, we apply
 the path building algorithm to $F$.  The loop in step 1 of the path building algorithm applied
 to $F$ breaks at the $x$-coordinate $x = 0$, at which point $\D(F)$ is the following lattice path.
 We conclude that $F$ is not contained in $\Ass(5,8)$.
 
 \begin{center}
 \includegraphics[scale = 0.5]{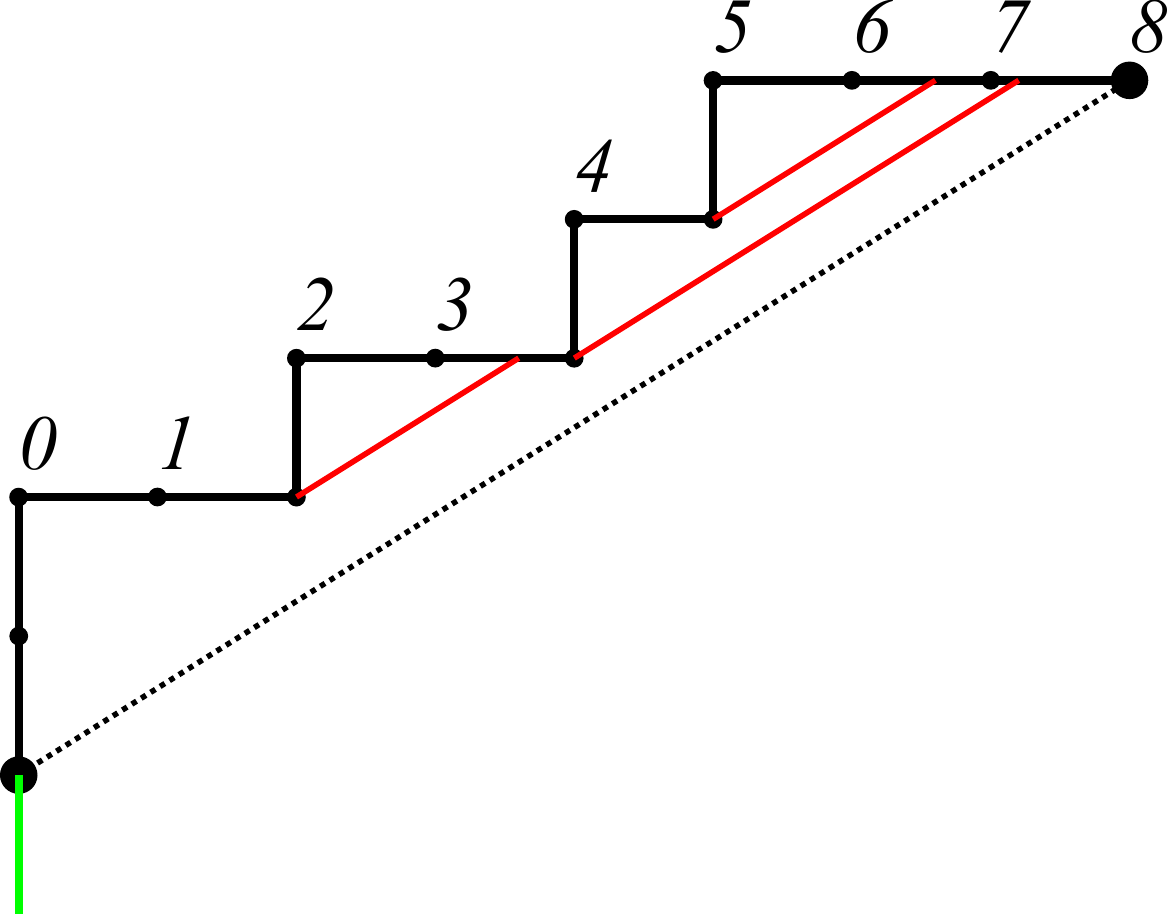}
 \end{center}
 
 As before, the green lattice step which goes below the line $y= \frac{5}{8}x$ indicates that the 
 path building algorithm
 breaks on the line $x = 0$.  To form $\overline{F}$ from $F$, we remove all redundant diagonals from $F$.
 The only redundant diagonal is $57$, so we have 
 $\overline{F} = \{48, 24, 04\}$.  In terms of lattice paths, the redundancy of $57$ corresponds to the fact
 that the laser corresponding to $57$ in the above lattice path is entirely to the northwest of the laser
 corresponding to $48$.  Applying the path building algorithm to $\overline{F}$ again
 leads to the loop in step $1$ breaking at $x = 0$, at which point $\D(\overline{F})$ is the following 
 lattice path.  Observe that the only lasers fired from $\D(\overline{F})$ which give rise to diagonals
 in $\overline{F}$ are fired from valleys.
 
 \begin{center}
 \includegraphics[scale = 0.5]{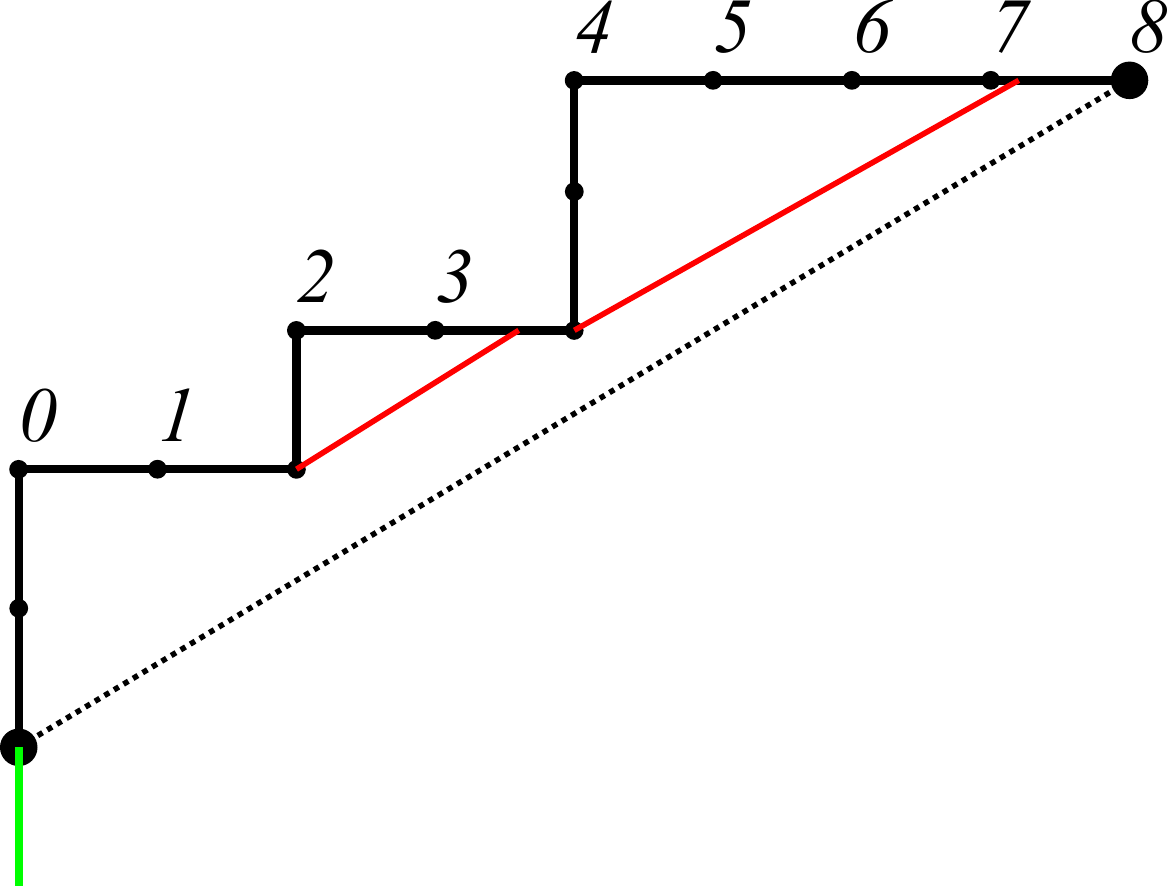}
 \end{center}
 
 To form $\overline{\overline{F}}$ from $\overline{F}$, we remove all diagonals in $\overline{F}$ with 
 smaller vertex $0$.  Therefore, we have that $\overline{\overline{F}} = \{48, 24\}$.  We have that 
 $\overline{\overline{F}}$ is a face of $\Ass(5,8)$ and $\D(\overline{\overline{F}})$ is the above lattice
 path without the green step.

 We claim that there exist diagonals $d \in \overline{\overline{F}}$ and $d' \in \overline{F}$ such that
 the smaller boundary point of $d'$ is $0$ and $\{d, d'\}$ is not a face of $\Ass(5,8)$ 
 Since $d, d' \in F$, this would imply
  that
 $F$ is not an empty face.  To see this, let $P_1 = (2,2)$ and $P_2 = (4,3)$ be the valleys of the above
 lattice path $\D(\overline{F})$, read from right to left.  By the definition of a redundant diagonal,
 the only lasers fired from $\D(\overline{F})$ are fired from the valleys of $\D(\overline{F})$ and the 
 laser $\ell_{\D(\overline{F})}(P)$ fired from a valley $P$ hits $\D(\overline{F})$ on the first horizontal
 run to the northeast of $P$.   
 
 We consider the vertical line segments $L_1$ and $L_2$ given by the vertical runs of
 $\D(\overline{F})$ which lie above the valleys $P_1$ and $P_2$.  In our example, the segment $L_1$ has length
 $1$ and the segment $L_2$ has length $2$.  We translate these segments along the line $y = \frac{5}{8}x$
 until they lie on the line $x = 0$, obtaining the line segments $L_1'$ and $L_2'$.  This is shown in the diagram
 below (where the line $x = 0$ is drawn in green).  
 This diagram also shows that the various $L_s'$ line segments need not be disjoint - indeed, 
 $L_1'$ is contained in $L_2'$.  By coprimality, none of the $L_s'$ line segments has a lattice point 
 on its boundary.
 
 \begin{center}
 \includegraphics[scale = 0.5]{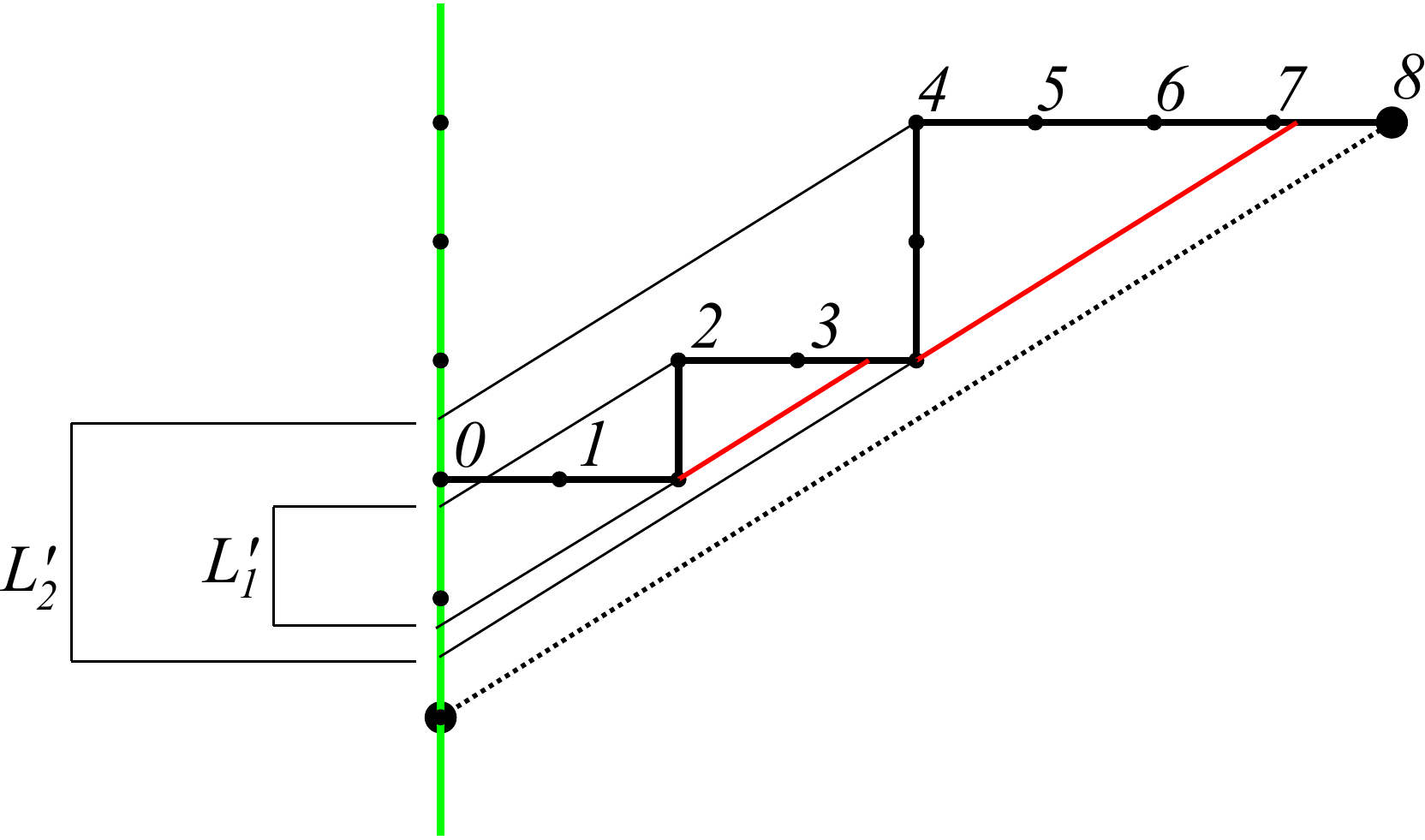}
 \end{center}
 
For any valley $P_s$, the laser $\ell(P_s)$ cuts off a single diagonal of the form $0j$ for each lattice
point in the interior of $L_s'$, where $j$ is an $x$-coordinate of the first horizontal run to the northeast of $P_s$.
Looking at the above diagram, we see that the laser $\ell(P_1)$ cuts of the diagonal $04$ (corresponding
to the lattice point $(0,1)$ in the interior of $L_1'$)
and $\ell(P_2)$ cuts off the diagonals $05$ and $07$ (corresponding to the lattice points
$(0,2)$ and $(0,1)$ in the interior of $L_2'$, respectively).

While the line segments $L_s'$ may intersect for various 
values of $s$, the lack of redundant edges in $\overline{F}$ forces the sets of diagonals cut off by the lasers
$\ell(P_s)$ to be disjoint.  This means that the lasers for all of the valleys $P_s$ cut off a total of $2 + 1 = 3$ 
diagonals.  On the other hand, there are $\lfloor \frac{5(8-0)}{8} \rfloor - 1 = 4$ total $5,8$-admissible diagonals
of the form $0j$ and only $\lfloor \frac{5(8-0)}{8} \rfloor - 1 - (2+1) = 1$ of these diagonals 
(namely, the diagonal $02$) is not cut off by one of the lasers.  This single diagonal $02$ 
can be realized as a laser
diagonal on $\D(\overline{F})$ by firing a laser from the $\lfloor \frac{5(8-0)}{8} \rfloor - 1 - (2+1) = 1$ lattice
point (namely, the lattice point $(0,1)$) on the line $x = 0$ which lies on $\D(\overline{F})$.  Since the path building
algorithm breaks on $\overline{F}$ on the line $x = 0$, this means that $\overline{F}$ must contain a diagonal
$0j$ cut off by one of the lasers $\ell(P_s)$.  In fact, the face $\overline{F}$ contains the diagonal $04$, which is cut
off by the laser $\ell(P_1)$.  We can use the path building algorithm to see that 
$\{ d(P_1), 04 \} = \{24, 04\}$ is not a face of $\Ass(5,8)$, so that $F$ is not an empty face.
\end{example}

\section{Obstruction graphs}
\label{Obstruction graphs}

Proposition~\ref{flag} tells us that the obstructions preventing a face $F$ of $\Asshat(a,b)$
from being a face of $\Ass(a,b)$ are local in nature.  To keep track of these obstructions, we make use of a graph.

\begin{defn}
Let $a < b$ be coprime positive integers.  The \textbf{\textit{obstruction graph}} $\OG(a,b)$ has vertex set 
given by the collection of $a,b$-admissible diagonals in $\PP_{b+1}$ and an edge 
connecting two distinct $a,b$-admissible diagonals $d$ and $d'$ if $\{d, d'\}$ is contained in the complement
$\Asshat(a,b) - \Ass(a,b)$.
\end{defn}

By Proposition~\ref{flag}, a face $F \in \Asshat(a,b)$ is contained in $\Ass(a,b)$ if and only if 
$F$ does not contain any edges of $\OG(a,b)$.  
In the language of deletions, we have that
$\dl_{\Asshat(a,b)}(E(\OG(a,b))) = \Ass(a,b)$, where
$E(\OG(a,b))$ denotes the set of edges of the obstruction graph $\OG(a,b)$.

\begin{example}
When $b \equiv 1$ modulo $a$, the graph $\OG(a,b)$ has no edges and $\Asshat(a,b) = \Ass(a,b)$.
The obstruction graph $\OG(3,5)$ has as its vertex set the collection of $3,5$-admissible diagonals
of $\PP_6$ and edges $\{04 - 24, 15 - 35\}$.
Figure~\ref{obstruction} shows the obstruction graph $\OG(5,8)$.  The rectangles in this diagram 
correspond to a decomposition of $\OG(a,b)$ given in Lemma~\ref{same-larger-endpoint}.
\end{example}

\begin{figure}
\centering
\includegraphics[scale = 0.6]{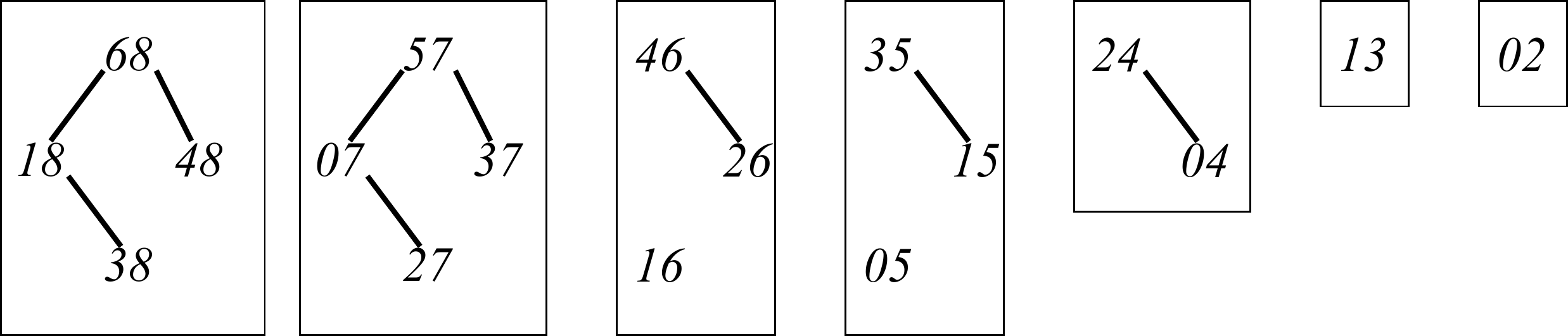}
\caption{The obstruction graph $\OG(5,8)$.}
\label{obstruction}
\end{figure}

Our strategy for proving that $\Asshat(a,b)$ collapses
onto $\Ass(a,b)$ is to give a total order 
$E(\OG(a,b)) = \{e_1 \prec \cdots \prec e_N\}$ on the edge set of the obstruction graph.  For $0 \leq r \leq N$, we
define complexes $\Ass(a,b)_r$ inductively by 
$\Ass(a,b)_N := \Asshat(a,b)$ and
$\Ass(a,b)_r := \dl_{\Ass(a,b)_{r+1}}(e_{r+1})$ for $0 \leq r \leq N-1$.  It follows that
we have a chain of subcomplexes
$\Ass(a,b)_0 \subset \cdots \subset \Ass(a,b)_N$ with 
$\Ass(a,b)_0 = \Ass(a,b)$ and $\Ass(a,b)_N = \Asshat(a,b)$.  We will prove that
$\Ass(a,b)_r$ collapses onto $\Ass(a,b)_{r+1}$ for all $1 \leq r \leq N$.  Before we define this 
order and to help us realize these collapsings, we will prove several results on the structure of
obstruction graphs.

A glance at Figure~\ref{obstruction} shows that if $\{ij, km\}$ is an edge of $\OG(5,8)$, then $j = m$.
This is not a coincidence.  Our main structural result about obstruction graphs is as follows.

\begin{lemma}
\label{same-larger-endpoint}
Let $i j$ and $km$ be two $a,b$-admissible diagonals which form an edge of 
$\OG(a,b)$.  We have that $j = m$.
\end{lemma}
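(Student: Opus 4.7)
The plan is to prove the contrapositive: if $\{ij, km\}$ is a non-crossing pair of distinct $a,b$-admissible diagonals with $j \neq m$, then $\{ij, km\}$ is already a face of $\Ass(a,b)$, and hence cannot be an edge of $\OG(a,b)$. By Lemma~\ref{valid}, it suffices to verify that the path building algorithm applied to $F := \{ij, km\}$ terminates with a valid $a,b$-Dyck path.

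After relabeling so that $i \leq k$, the non-crossing condition together with $j \neq m$ leaves exactly three cases: (A) $i < j \leq k < m$, with $ij$ and $km$ side by side (possibly sharing the endpoint $j = k$); (B) $i < k < m < j$, with $km$ strictly nested inside $ij$; and (C) $i = k < j < m$, sharing the smaller endpoint. Set $\ell_1 := \lfloor a(m-k)/b \rfloor$ and $\ell_2 := \lfloor a(j-i)/b \rfloor$. The workhorse estimate, proved in a few lines using admissibility (i.e.\ $j - i - 1 \in S(a,b)$) and $\gcd(a,b) = 1$, is that writing $a(j-i) = b\ell_2 + r_2$ and $a(m-k) = b\ell_1 + r_1$, both residues $r_1, r_2$ lie strictly between $0$ and $a$.

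In Case A the algorithm first places a valley at $(k, a - \ell_1)$, moves west at height $a - \ell_1$, and then places a valley at $(i, a - \ell_1 - \ell_2)$; the two lasers, of vertical extents $\ell_1$ and $\ell_2$, land in the interiors of the east steps ending at $m$ (at height $a$) and at $j$ (at height $a - \ell_1$) thanks to the residue estimate. In Case C the algorithm builds a vertical run at $x = i$ whose valley $(i, a - \lfloor a(m-i)/b \rfloor)$ fires the laser for $im$, while the higher lattice point $(i, a - \ell_2)$ on the same run fires the laser for $ij$. For both cases the remaining verifications (all valleys lie weakly above the line $y = \frac{a}{b}x$; laser interiors avoid the path) reduce to the elementary inequality $m, j \leq b$.

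The main obstacle is Case B, where the laser emitted from the valley $(i, a - \ell_2)$ must slip underneath both the horizontal run at height $a - \ell_1$ (spanning $x \in [i, k]$) and the vertical run at $x = k$ before reaching the east step ending at $j$, which sits at height $a$. A direct calculation reduces this geometric requirement to the strict inequality $\ell_2 - \ell_1 > a(k-i)/b$. Substituting $\ell_s = (aL_s - r_s)/b$ for the two diagonal lengths $L_1 = m-k$ and $L_2 = j-i$, this becomes $a(j - m) > r_2 - r_1$. Since $r_1, r_2 \in (0, a)$ we have $r_2 - r_1 < a$, while $j > m$ with both integral yields $a(j-m) \geq a$, so the inequality holds strictly. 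Thus the path building algorithm succeeds in Case B as well, and $\{ij, km\} \in \Ass(a,b)$ in every case.
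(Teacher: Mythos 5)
Your proof is correct and follows essentially the same strategy as the paper's: in each non-crossing configuration with $j \neq m$, exhibit an explicit $a,b$-Dyck path from which both lasers fire, so that $\{ij,km\} \in \Ass(a,b)$. Your Cases C and A correspond directly to the paper's $i = k$ and $j \le k$ cases and use the same Dyck paths. In the nested case, the paper works forward (assume $\{ij,km\}$ is an obstruction, place valleys at $P' = (i, a-\ell_2)$ and $Q' = (k, a-\ell_1)$, and argue that the extended laser from $P'$ must lie to the left of the laser from $Q'$, forcing $j \le m$), while you work the contrapositive (assume $j > m$, verify the clearance inequality $\ell_2 - \ell_1 > a(k-i)/b$, and conclude both lasers fire). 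These are the same slope computation run in opposite directions; your formulation is slightly more uniform across the three cases and has the virtue of isolating the key arithmetic fact $0 < r_1, r_2 < a$, which is exactly where admissibility and $\gcd(a,b)=1$ enter. Two minor points worth spelling out if you were to write this up: in Case A the lowest valley $(i, a-\ell_1-\ell_2)$ is weakly above $y = \frac{a}{b}x$ because $\ell_1 + \ell_2 < a(m + j - k - i)/b$ and $m + j - k \le m \le b$ (this uses $j \le k$, not merely $m, j \le b$); and in Case B the clearance inequality simultaneously guarantees the laser from $(i, a-\ell_2)$ clears both the horizontal run at height $a - \ell_1$ and the vertical run at $x = k$, and one should note $\ell_2 - \ell_1 \ne a(k-i)/b$ exactly (integer vs.\ non-integer) so the dichotomy is clean.
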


\begin{proof}
If $i = k$, we claim that the set $\{ ij, im\}$ is a face of $\Ass(a,b)$.  Let
$D$ be the $a,b$-Dyck path shown below which has a vertical run of maximal length at $x = i$.

\begin{center}
\includegraphics[scale = 0.5]{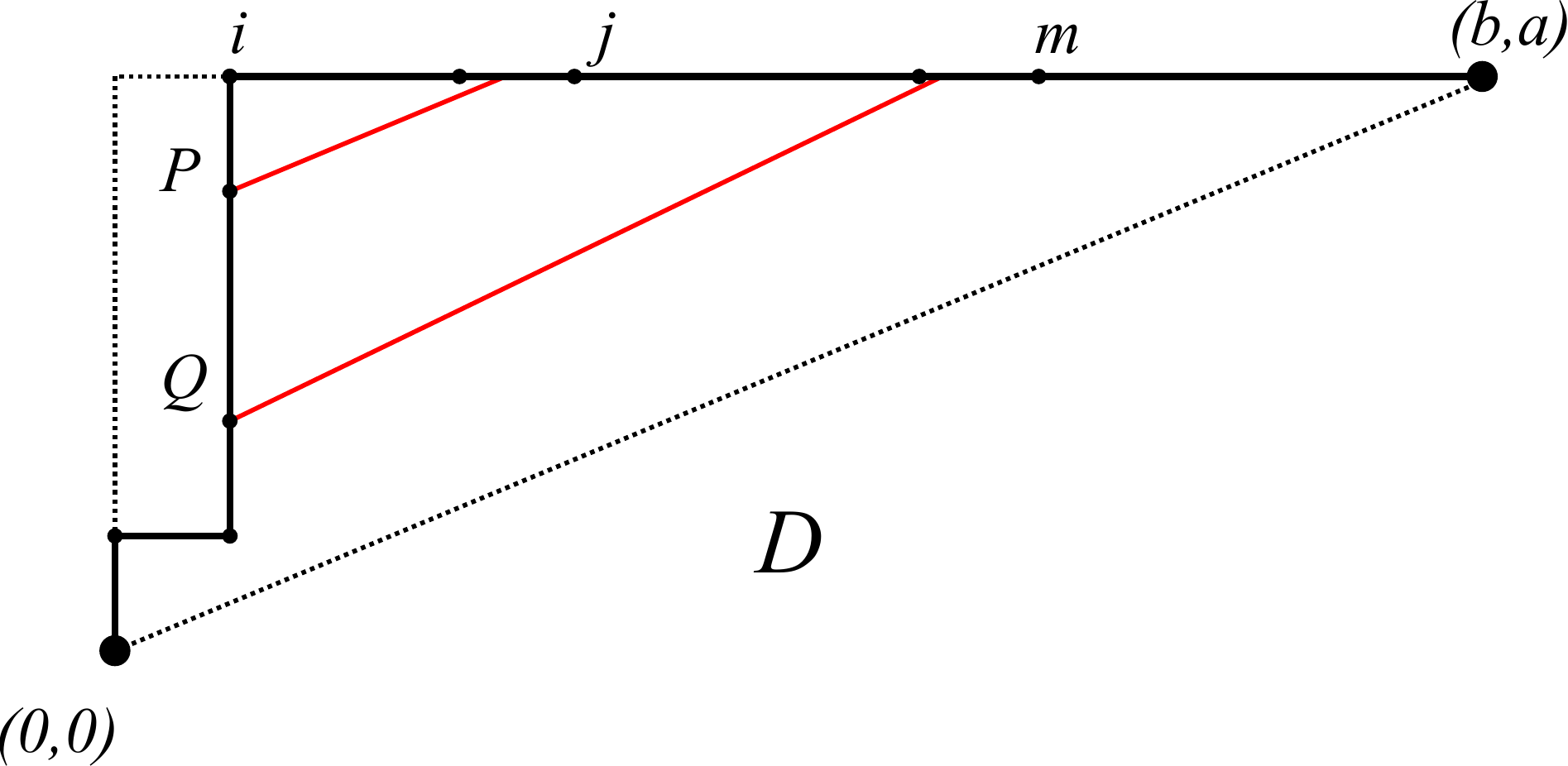}
\end{center}

Since $ij$ and $im$ are $a,b$-admissible, there must be points $P$ and $Q$ on $D$
with $x$-coordinate $i$ whose laser diagonals are $d(P) = ij$ and $d(Q) = im$.  We conclude
that $\{ij, im\} \subseteq \F(D)$ and $\{ij, im\}$ is a face of $\Ass(a,b)$.
This is a contradiction.  

We conclude that $i \neq k$.
Without loss of generality, we may assume that $i < k$.  Since the diagonals $i  j$ and $k m$ do not 
cross, we have that either $j \leq k$ or
$m \leq j$.  
We argue first that $j \leq k$ is impossible.

Suppose that $i < k$ and $j \leq k$.  We claim that $\{ij, km\}$ is a face of $\Ass(a,b)$, which
is a contradiction.  Since $km$ is admissible, there exists an $a,b$-Dyck path
$D_1$ of the form $D_1 = N^r E^k N^{a-r} E^{b-k}$ (for some $r > 0$) such that the laser fired from the
unique valley of $D_1$ gives rise to the diagonal $km$.  Similarly, there exists another
$a,b$-Dyck path $D_2$ of the form $D_2 = N^s E^i N^{a-s} E^{b-i}$ such that
the laser fired just after the $E^i$ horizontal run (which is empty if $i = 0$) gives rise to the 
diagonal $ij$.  Let $D_3$ be the lattice path 
$D_3 = N^{r+s-a} E^i N^{a-s} E^{k-i} N^{a-r} E^{b-k}$.  Geometrically, the path $D_3$ is obtained from
the path $D_1$ by lowering the easternmost $i$ steps of the horizontal run $E^k$ by
$a-s$ units.  This procedure gives rise to a valid $a,b$-Dyck path $D_3$ (i.e., the lattice path 
$D_3$ stays above the line $y = \frac{a}{b} x$) because the laser fired from the point just after the horizontal
run $E^i$ on $D_3$ must hit $D_3$ in the step on 
the horizontal run
$E^{k-i}$ whose right endpoint has 
$x$-coordinate $j$ (here we use $j \leq k$ and the construction of $D_3$). 
Since $D_1$ is an $a,b$-Dyck path, we conclude that this laser is above the line
$y = \frac{a}{b} x$ and $D_3$ is also an $a,b$-Dyck path.  But now $ij$ and $km$ are both laser diagonals
arising from $D_3$, so that $\{ij, km\} \subseteq \F(D_3)$, which is a contradiction.

By the above two paragraphs, we have that $i < k$ and $m \leq j$.
We will argue that $j \leq m$ as well.

Let $P'$ be the unique lattice point with $x$-coordinate $i$ such that the laser $\ell(P')$ of slope
$\frac{a}{b}$ hits the 
horizontal line $y = a$ at the horizontal step whose right endpoint has $x$-coordinate $j$.
Similarly, let $Q'$ be the unique lattice point with $x$-coordinate $j$ such that the laser
$\ell(Q')$ hits the horizontal line $y = a$ at the horizontal step whose right endpoint has 
$x$-coordinate $m$.  If the $y$-coordinate of $P'$ is greater than or equal to the $y$-coordinate 
of $Q'$, then clearly $j \leq m$.  
So we may assume that the $y$-coordinate of $P'$ is strictly less than the $y$-coordinate of $Q'$.

Let $D'$ be the $a,b$-Dyck path shown below with lasers fired from and valleys at $P'$ and $Q'$.  
We extend the laser $\ell(P')$ in a dashed fashion to the line $y = b$.

\begin{center}
\includegraphics[scale = 0.5]{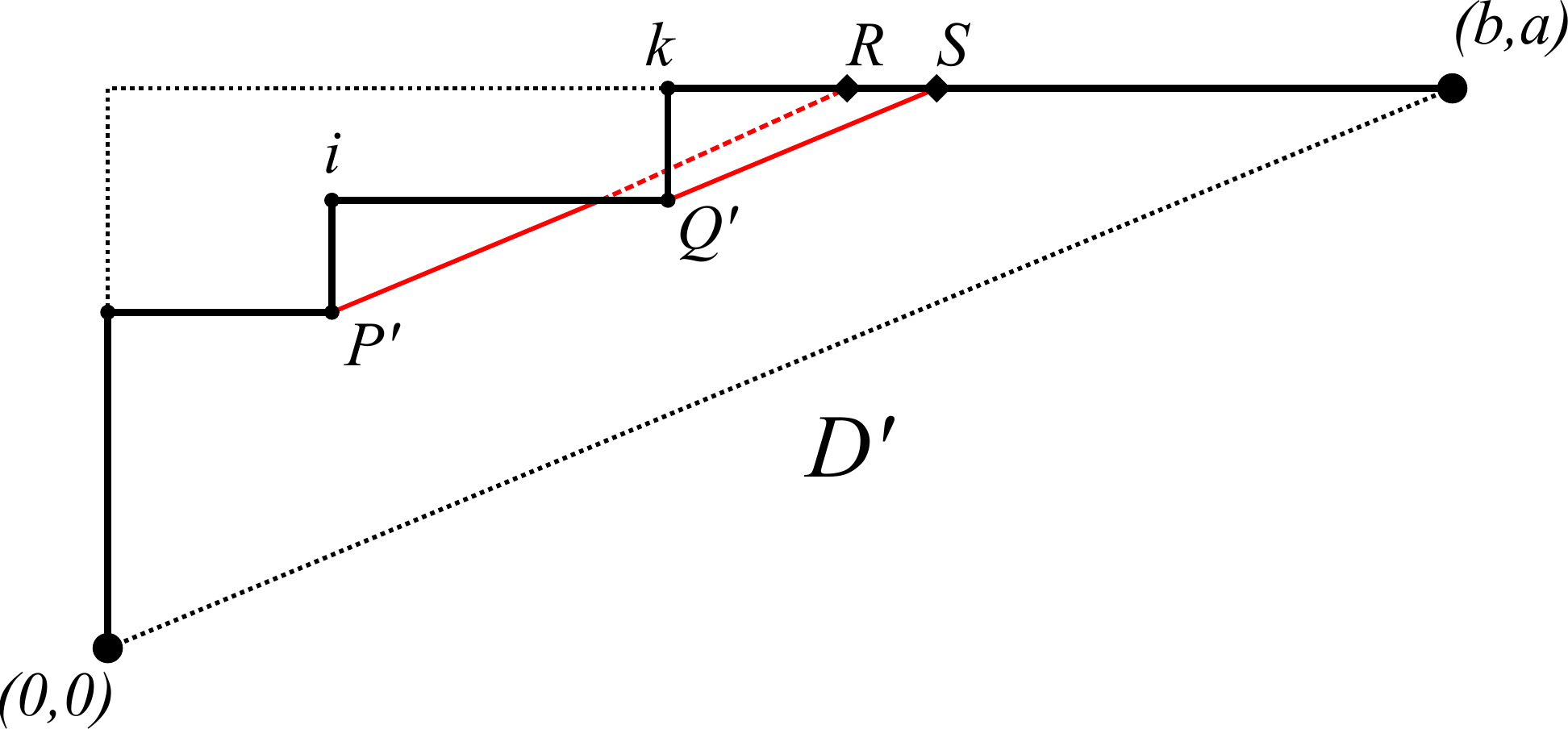}
\end{center}

We know that the right endpoint of the step containing $R$ has $x$-coordinate $j$ and that the right
coordinate of the step containing $S$ has $x$-coordinate $m$.  Since $\{ij, km\}$ is not a face of 
$\Ass(a,b)$, the laser $\ell(P')$ must hit the horizontal run of $D'$ which lies to the right of $Q'$.
This means that the laser $\ell(P')$ (and its extension) must lie to the left of $\ell(Q')$.  This implies that
$j \leq m$.
\end{proof}

Lemma~\ref{same-larger-endpoint} implies that the obstruction graph $\OG(a,b)$ breaks up into  
components $\OG(a,b)_0 \uplus \OG(a,b)_1 \uplus \dots \uplus \OG(a,b)_b$, where
$\OG(a,b)_m$ has edges consisting of two diagonals whose common larger endpoint is $m$.  
Figure~\ref{obstruction} gives these components as rectangles.  Observe that these components
may be disconnected (for example, the component $\OG(5,8)_6$ is disconnected) and
may be empty (for example, the components $\OG(5,8)_0$ and $\OG(5,8)_1$ are empty).  Although
we will not need this result, we remark that the obstruction graph $\OG(a,b)$ is determined
by the component $\OG(a,b)_b$.  If $d = ij$ is a diagonal in $\PP_{b+1}$ and $k \geq 0$ is such that
$i-k \geq 0$, we let $d - k$ denote the diagonal $(i-k)(j-k)$ in $\PP_{b+1}$.  If $i-k < 0$, we leave
$d - k$ undefined.  We define $d + k$ similarly.

\begin{lemma}
\label{determined-by-largest-component}
For $a < b$ coprime and $0 \leq m \leq b$, the component $\OG(a,b)_m$ of the obstruction graph
$\OG(a,b)$ has vertex set given by the diagonals 
\begin{equation*}
\{ d - (b-m) \,:\, \text{$d$ a vertex of $\OG(a,b)_b$ such that $d - (b-m)$ is defined} \}.
\end{equation*}
The edge set of $\OG(a,b)_m$ is given by 
\begin{equation*}
\{ \{d,  d'\} \,:\, 
\text{$\{d + (b-m) , d' + (b-m)\}$ is an edge of $\OG(a,b)_b$} \}.
\end{equation*}
\end{lemma}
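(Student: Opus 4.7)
The plan is to reduce the lemma to shift-invariance of two properties: $a,b$-admissibility of a diagonal, and membership of a pair $\{d_1, d_2\}$ in the edge set of $\OG(a,b)$ (for pairs with common larger endpoint). Once both invariances are established, taking $c = b - m$ shifts diagonals with larger endpoint $m$ onto diagonals with larger endpoint $b$ (whenever the shift is defined), which gives the desired bijective correspondences on both vertices and edges and yields both claims of the lemma.

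The first invariance is immediate from the definition: $ij$ is $a,b$-admissible iff $j - i - 1 \in S(a,b)$ and $b - j + i \in S(a,b)$, both depending only on $j - i$. This already handles the vertex set description. For the edge description, by Lemma~\ref{same-larger-endpoint} it suffices to analyze pairs $\{ij, kj\}$ with $i < k < j$ both admissible. The key step is to characterize when such a pair is a face of $\Ass(a,b)$ by a condition depending only on $j - i$, $j - k$, $a$, and $b$. To produce this characterization, I examine the local structure forced on any Dyck path $D$ with $\{ij, kj\} \subseteq \F(D)$: the lattice points at the bottoms of the north steps firing the two lasers have $y$-coordinates uniquely determined by the height $h$ of the east step of $D$ ending at $x = j$, via $y_i = h - \delta(j-i)$ and $y_k = h - \delta(j-k)$, where $\delta(r)$ denotes the unique integer in the open interval $\bigl((r-1)a/b,\, r a/b\bigr)$ (well-defined for admissible lengths by $\gcd(a,b) = 1$). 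The requirement that the laser from $(i, y_i)$ reach the east step at $x = j$ without being blocked at $x = k$ becomes the $h$-independent condition
\[
    \delta(j-i) - \delta(j-k) > (k - i)\, a/b.
\]

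Conversely, when this condition holds, the explicit Dyck path
\[
    D_0 := N^{y_i} E^i N^{y_k - y_i} E^{k - i} N^{h - y_k} E^{j - k} N^{a - h} E^{b - j}, \qquad h := \lceil aj/b \rceil,
\]
realizes both diagonals. Checking this amounts to verifying (via routine inequalities involving $\delta(r)$, $\lceil r a/b \rceil$, and $\lfloor r a/b \rfloor$ together with admissibility) that $D_0$ stays weakly above $y = ax/b$ at every corner and that the two lasers land on the correct interior points of the east run $E^{j-k}$. The main obstacle is precisely this forward construction: verifying that the chosen $h$ forces $y_i \geq 0$, that the strict inequality $y > ax/b$ holds at the corners $(i, y_i)$, $(k, y_k)$, $(j, h)$, and that the laser from $(i, y_i)$ is not obstructed by any intermediate east run. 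Once the characterization is established, shift-invariance of edges follows at once, combining with shift-invariance of admissibility to prove the lemma.
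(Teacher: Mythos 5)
Your approach is correct, and it takes a genuinely different route from the paper's. The paper's proof is a two-sentence sketch: it observes that rotation-invariance of $a,b$-admissibility handles the vertex set, and asserts that the edge set follows from considering east--west translations of Dyck paths of the form $N^{i_1}E^{j_1}\cdots N^{i_r}E^{j_r} \mapsto N^{i_1}E^{j_1-1}N^{i_2}E^{j_2}\cdots N^{i_r}E^{j_r+1}$. You instead extract an explicit numerical criterion: using Lemma~\ref{same-larger-endpoint} to restrict to nested pairs $\{ij,kj\}$ with $i<k<j$, you deduce that if both lasers land on the east step at $x=j$ (of height $h$) then their sources are forced to have $y$-coordinates $h-\delta(j-i)$ and $h-\delta(j-k)$, and that the pair is a face of $\Ass(a,b)$ exactly when $\delta(j-i)-\delta(j-k) > (k-i)a/b$. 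This condition visibly depends only on $j-i$ and $j-k$, so shift-invariance is immediate, whereas the paper's translation argument requires separately checking that the translated path remains a Dyck path and that lasers shift correspondingly. Your route has the advantage of being fully self-contained and not requiring care about when the translation map is well-defined or a bijection; the cost is the explicit verification that the path $D_0$ (with $h=\lceil aj/b\rceil$) is a valid Dyck path realizing both diagonals, which you correctly identify as the remaining routine work but do not carry out. That verification does go through: admissibility forces $\delta(j-k)\geq 1$, the strict inequalities $y_i> ia/b$ and $y_k>ka/b$ follow from $h\geq aj/b$ and $\delta(r)<ra/b$, and $y_k-y_i=\delta(j-i)-\delta(j-k)$ is a positive integer by the hypothesis. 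Note also that for the forward direction one must allow the laser source at $x=k$ to be an interior point of a vertical run (not necessarily a valley); since the valley of that run lies weakly below $(k,y_k)$, the necessary inequality $y_i+(a/b)(k-i)<y_k$ still follows, so your biconditional is sound. Given that the paper's own proof is explicitly labeled a sketch, your proposal is at a comparable (arguably greater) level of rigor.
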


In other words, to form $\OG(a,b)_m$ from $\OG(a,b)_{m+1}$, we subtract $1$ from all the boundary points
involved in the vertex diagonals and erase any vertex diagonals (together with their incident edges)
which involve negative boundary points.  This subtraction and deletion process can be 
seen from right to left in Figure~\ref{obstruction}.

\begin{proof} (Sketch.)
The statement about the vertex set of $\OG(a,b)_m$ follows because the set of $a,b$-admissible diagonals
is closed under rotation of $\PP_{b+1}$.  The statement about the edge set of $\OG(a,b)_m$ follows 
by considering `east-west translations' of Dyck paths 
of the form 
\begin{equation*}
N^{i_1} E^{j_1} \dots N^{i_r} E^{j_r} \mapsto N^{i_1} E^{j_1 - 1} N^{i_2} E_{j_2} \dots N^{i_{r-1}} E^{j_{r-1}}
N^{i_r} E^{j_r + 1}.
\end{equation*}
\end{proof}

When 
drawn on $\PP_{b+1}$, the  edges $\{im, km\}$ belonging to $\OG(a,b)_m$ look like wedges whose
common point is $m$ such that $m$ is the largest boundary point on the wedge.  The next two lemmas
concern the more general situation of a wedge $\{im, km\}$ of $a,b$-admissible diagonals with $i < k < m$,
whether or not $\{im, km\}$ is an obstructing edge.  We want to develop some sufficient conditions 
which guarantee that the completion
$ik$ of the triangle is an $a,b$-admissible diagonal.  A schematic of this situation is shown below.

\begin{center}
\includegraphics[scale = 0.5]{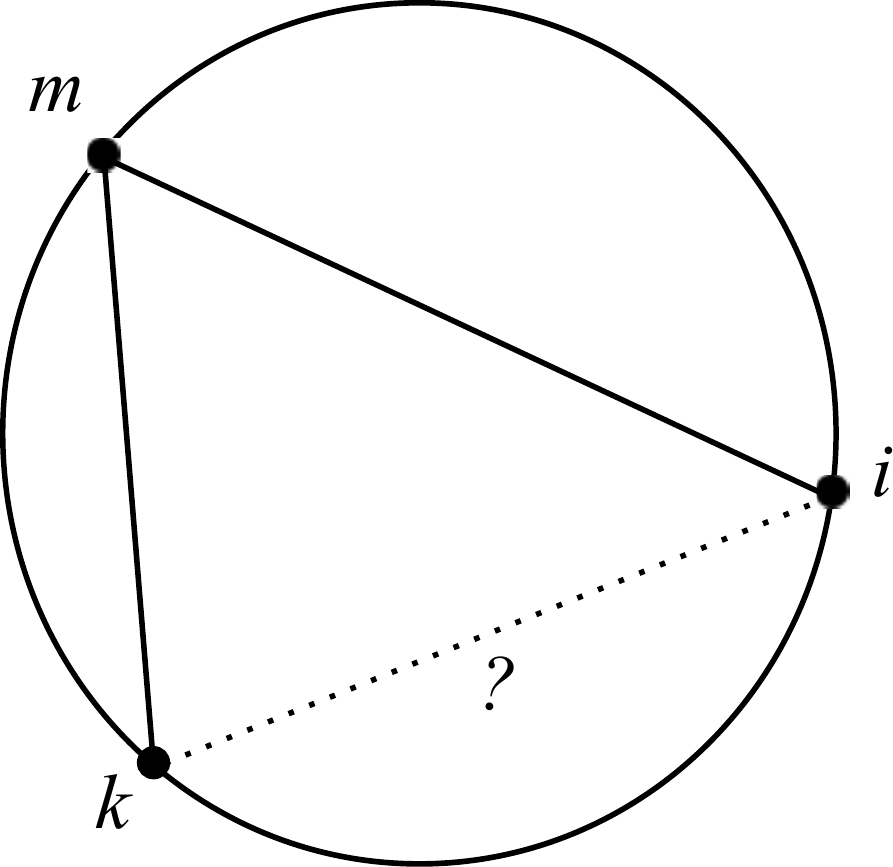}
\end{center}

We are interested in whether $ik$ is $a,b$-admissible because these diagonals
will be used to locate cone vertices as in Lemma~\ref{cone-vertex-lemma}.
It is not always the case that $ik$ is admissible.  For example, if $(a,b) = (5,8)$,
then $\{05, 15\}$ is a wedge of $5,8$-admissible diagonals whose common point is the maximal point but
$01$ is not even a diagonal in $\PP_9$, let alone $5,8$-admissible.  
Our first lemma states that if $\{im, km\}$ is  an edge of $\OG(a,b)$, then $ik$ is an $a,b$-admissible diagonal,
so that the dotted edge is an $a,b$-admissible diagonal in the diagram above.

\begin{lemma}
\label{completing-wedges}
Let $i m$ and $k  m$ be two $a,b$-admissible diagonals 
such that $i < k < m$ and $\{i m, km\}$ is an edge of
$\OG(a,b)$.  We have
that $ik$ is an $a,b$-admissible diagonal.
\end{lemma}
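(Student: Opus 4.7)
My plan is to recast admissibility as a fractional-part inequality, split into two cases according to whether a certain ``carry'' occurs, and in one case construct an explicit Dyck path realizing both $im$ and $km$ (contradicting the hypothesis), while in the other case derive the admissibility of $ik$ by elementary arithmetic.

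The first step is the reformulation. Writing $u_j := \{ja/b\}$ for each integer $j$, one checks directly from $S(a,b) = \{\lfloor jb/a \rfloor : 1 \le j \le a-1\}$ that for $0 < j < b$, $j \in S(a,b)$ iff $u_j \ge 1 - a/b$. From this, a diagonal $pq$ of $\PP_{b+1}$ (with $p < q$) is admissible iff $u_{q-p-1} \ge 1 - a/b$; the companion condition $b-q+p \in S(a,b)$ is equivalent, since $u_{q-p-1}+a/b \equiv u_{q-p} \pmod{1}$. Set $\ell_2 := m-k$, $\ell_3 := k-i$, $x := u_{\ell_2-1}$, and $y := u_{\ell_3}$. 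Then the admissibilities of $km$ and $im$ become $x \ge 1-a/b$ and $\{x+y\} \ge 1-a/b$ (using $m-i-1 = (\ell_2-1)+\ell_3$).

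Next I would split on whether $x+y \ge 1$ (``carry'') or $x+y < 1$. In the carry case, set $A := \lceil(m-i-1)a/b\rceil$ and $A' := \lceil(m-k-1)a/b\rceil$, so that $A-A' = \lfloor \ell_3 a/b \rfloor + 1$. I would exhibit the candidate Dyck path
\[
D^* := N^{a-A}\,E^i\,N^{A-A'}\,E^{k-i}\,N^{A'}\,E^{b-k},
\]
verify that $D^*$ stays weakly above $y=(a/b)x$ (using $A < (m-i)a/b \le (b-i)a/b$ and the analogous bound for $A'$), and show that the lasers fired from the lattice points $(i,a-A)$ and $(k,a-A')$ have diagonals $im$ and $km$. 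The key inequality $A-A' > \ell_3 a/b$ (equivalent to the carry) ensures that the laser from $(i,a-A)$ reaches height $a-A'$ only at $x>k$, so it passes beneath the intermediate east step of $D^*$ at that height and rises to terminate on the top east step ending at $x=m$. This would place $\{im,km\}$ in the facet $\F(D^*)$ of $\Ass(a,b)$, contradicting the assumption that $\{im,km\}$ is an edge of $\OG(a,b)$; therefore the carry case cannot occur.

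In the no-carry case $x+y<1$, combining with $x \ge 1-a/b$ gives $y < 1-x \le a/b$. Then $u_{\ell_3-1} = \{(\ell_3-1)a/b\} = y - a/b + 1 \ge 1 - a/b$, so $\ell_3 - 1 = k-i-1 \in S(a,b)$ and $ik$ is admissible. (Here $\ell_3 \ge 2$ is automatic: if $\ell_3 = 1$ then $y = a/b$, contradicting $y < a/b$.) I expect the most delicate piece to be the laser-obstruction bookkeeping in the carry case---verifying carefully that the laser from $(i,a-A)$ in $D^*$ is not intercepted before reaching the east step ending at $x=m$---and the carry condition is precisely the gap that makes this work.
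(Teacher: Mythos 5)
Your proof is correct, and while it uses the same Dyck path as the paper, it reaches the conclusion by a noticeably different (and more self-contained) route. The paper's proof of Lemma~\ref{completing-wedges} reuses the path $D'$ and the geometric facts already established inside the proof of Lemma~\ref{same-larger-endpoint}: that the two parallel lasers from the tangent points $P'=(i,a-A)$ and $Q'=(k,a-A')$ meet $y=a$ within one unit of each other, and that the obstruction hypothesis forces $\ell(P')$ to stop on the middle horizontal run, necessarily at the step ending at $Q'$. Your argument instead recasts admissibility as the fractional-part inequality $u_{q-p-1}>1-a/b$ and splits on whether a carry occurs in $u_{\ell_2-1}+u_{\ell_3}$. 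The carry case is precisely the situation in which $\ell(P')$ clears the middle run of $D'$ and reaches the top, so your construction of $D^*$ (which is the same path as the paper's $D'$) and the contradiction you draw from $\{im,km\}\subseteq\F(D^*)$ is the arithmetic counterpart of the paper's ``the laser must hit below $y=a$'' step; the no-carry case then yields admissibility of $ik$ by a two-line fractional-part computation with no geometry at all. Your version is longer but stands alone, whereas the paper's is short only because it borrows from the earlier proof. One small nit: the equivalence ``$j\in S(a,b)$ iff $u_j\ge 1-a/b$'' should be stated with strict inequality, since $u_{b-1}=1-a/b$ while $b-1\notin S(a,b)$; this is immaterial here because a diagonal has $1\le q-p-1\le b-2$, where equality never occurs.
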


\begin{proof}
We consider the second $a,b$-Dyck path $D'$ drawn in the proof of Lemma~\ref{same-larger-endpoint}.
As was deduced in the proof of Lemma~\ref{same-larger-endpoint}, we know that $R$ and $S$ lie on the
same horizontal step (namely, the horizontal step with right endpoint having $x$-coordinate $m$).  In particular,
the points $R$ and $S$ are $< 1$ unit apart.  This means that the horizontal distance between the lasers
$\ell(P')$ and $\ell(Q')$ is $< 1$ unit.  In particular, the (solid portion of the) laser $\ell(P')$
hits $D'$ at the horizontal step with left endpoint $Q'$.  Since the $x$-coordinate of $P'$ is $i$ and
the $x$-coordinate of $Q'$ is $k$, we conclude that $ik$ is  an $a,b$-admissible diagonal.
\end{proof}

While Lemma~\ref{completing-wedges} will give us some of the cone vertices we will
need to perform our collapsing, we will  need more.  These additional vertices
will be provided by the following result.

\begin{lemma}
\label{completing-half-wedges}
Let $i  m$ and $j  m$ be two $a,b$-admissible diagonals 
 such that $\{i m, j m\}$ is
 an edge of $\OG(a,b)$.  Suppose that 
$i < k < j$ and $k  m$ is an $a,b$-admissible diagonal  such that
$\{k m, j m\}$ is \textbf{not} an edge of $\OG(a,b)$.
Then  $\{i m, km\}$ is an edge of $\OG(a,b)$ and
$i  k$ is an $a,b$-admissible diagonal
\end{lemma}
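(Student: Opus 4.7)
The plan is to attach to each admissible diagonal ending at $m$ a single integer invariant that simultaneously detects its admissibility and the $\Ass$-compatibility of pairs of such diagonals, and then deduce both conclusions from a short chain of inequalities.

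For each admissible diagonal $xm$ with $x < m$, let $P_x = (x, y_x)$ be the unique lattice point at $x$-coordinate $x$ whose slope-$a/b$ laser, extended if necessary, reaches the line $y = a$ at a point $(x^*_x, a)$ with $m - 1 < x^*_x < m$; existence and uniqueness of $P_x$ are both consequences of the admissibility of $xm$ together with $\gcd(a, b) = 1$. Set $\sigma_x := a(m - x^*_x) \in \{1, \dots, a-1\}$, which is equivalently the unique representative of $a(m-x) \bmod b$ in that range (and whose existence is in fact equivalent to the admissibility of $xm$). The injectivity of $x \mapsto a(m - x) \bmod b$ then forces the $\sigma$-values to be pairwise distinct.

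The core observation, an elaboration of the closing argument in the proof of Lemma~\ref{same-larger-endpoint}, is: \emph{for admissible $xm, x'm$ with $x < x' < m$, $\{xm, x'm\} \in \Ass(a,b)$ if and only if $\sigma_x < \sigma_{x'}$.} Any Dyck path $D$ realizing both diagonals as lasers must land them on the same horizontal step of $D$ ending at $m$, and the non-collision condition for the laser from $P_x$ translates into the slope-$a/b$ line through $P_{x'}$ hitting $y = a$ strictly to the left of the one through $P_x$, which is precisely $\sigma_x < \sigma_{x'}$. Necessity is the $y$-coordinate tracking already used in the proof of Lemma~\ref{same-larger-endpoint}; sufficiency is by explicit construction of a Dyck path in which both $P_x$ and $P_{x'}$ serve as north-step bottoms with both lasers terminating in the top horizontal step ending at $(m, a)$.

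Granting this, hypothesis $(1)$ gives $\sigma_i > \sigma_j$ and hypothesis $(4)$ gives $\sigma_j > \sigma_k$, so $\sigma_k < \sigma_i$ and $\{im, km\}$ is an edge of $\OG(a,b)$, establishing the first conclusion. For the second conclusion, admissibility of $ik$ amounts (by the same interval-containing-an-integer criterion, now with $k$ in place of $m$ and $i$ in place of $x$) to $a(k - i) \bmod b$ lying in $\{1, \dots, a - 1\}$. Computing $a(k - i) \equiv a(m - i) - a(m - k) \equiv \sigma_i - \sigma_k \pmod{b}$, and noting $\sigma_i > \sigma_k$ both lie in $\{1, \dots, a - 1\}$, we find that $\sigma_i - \sigma_k \in \{1, \dots, a - 2\}$ is itself the representative of $a(k-i) \bmod b$ in the required range, so $ik$ is admissible. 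The main technical obstacle is establishing the core observation, specifically the sufficiency direction, which calls for an explicit Dyck path construction; the necessity direction is directly analogous to the arithmetic carried out in the proof of Lemma~\ref{same-larger-endpoint}. Once the core observation is in place, both conclusions reduce to the single chain $\sigma_k < \sigma_j < \sigma_i$ together with a small modular computation.
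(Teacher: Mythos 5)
Your proof is correct, and it takes a genuinely different route from the paper. The paper proves the lemma by explicitly constructing a single Dyck path with three valleys at $x$-coordinates $i$, $k$, $j$, then tracking laser trajectories to see that the laser from the valley over $i$ must land on the horizontal step with left endpoint at the valley over $k$; a second ``folded-out'' path gives the first conclusion. You instead attach to every admissible diagonal $xm$ the single integer invariant $\sigma_x = a(m-x) \bmod b \in \{1,\dots,a-1\}$ and establish the sharper structural fact that for $x < x' < m$ the pair $\{xm, x'm\}$ lies in $\Asshat(a,b) - \Ass(a,b)$ if and only if $\sigma_x > \sigma_{x'}$. Once that is in hand, the three hypotheses collapse to the chain $\sigma_k < \sigma_j < \sigma_i$, and the admissibility of $ik$ is a one-line modular computation $a(k-i) \equiv \sigma_i - \sigma_k \pmod b$ with $\sigma_i - \sigma_k \in \{1,\dots,a-2\}$. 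The trade-off is clear: the paper's route is self-contained and local to the lemma, while yours fronts the cost of a characterization of the whole component $\OG(a,b)_m$ (in effect, describing its edges as the inversions of the permutation $x \mapsto \sigma_x$ restricted to admissible diagonals) and then gets the lemma, and Lemma~\ref{completing-wedges} as well, essentially for free. Your characterization would also make Lemma~\ref{same-larger-endpoint} and Lemma~\ref{determined-by-largest-component} quite transparent. The one place where you defer work --- the sufficiency direction of the core observation --- does go through: with $P_x = (x, y_x)$, $P_{x'} = (x', y_{x'})$ as you define them, the inequality $\sigma_x < \sigma_{x'}$ is equivalent to $y_{x'} - y_x > \tfrac{a}{b}(x'-x)$, so the two-valley path $N^{y_x} E^x N^{y_{x'}-y_x} E^{x'-x} N^{a - y_{x'}} E^{b-x'}$ is a valid Dyck path realizing both lasers on the top step; and for necessity the key point is that coprimality forces the difference between the step height and the $y$-coordinate of the firing point to be a fixed integer, so the hit point on the step ending at $m$ is $m - \sigma_x/a$ for every realizing path, not just the one through $P_x$. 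You might spell out that last step, since it is what makes the invariant path-independent.
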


\begin{center}
\includegraphics[scale = 0.5]{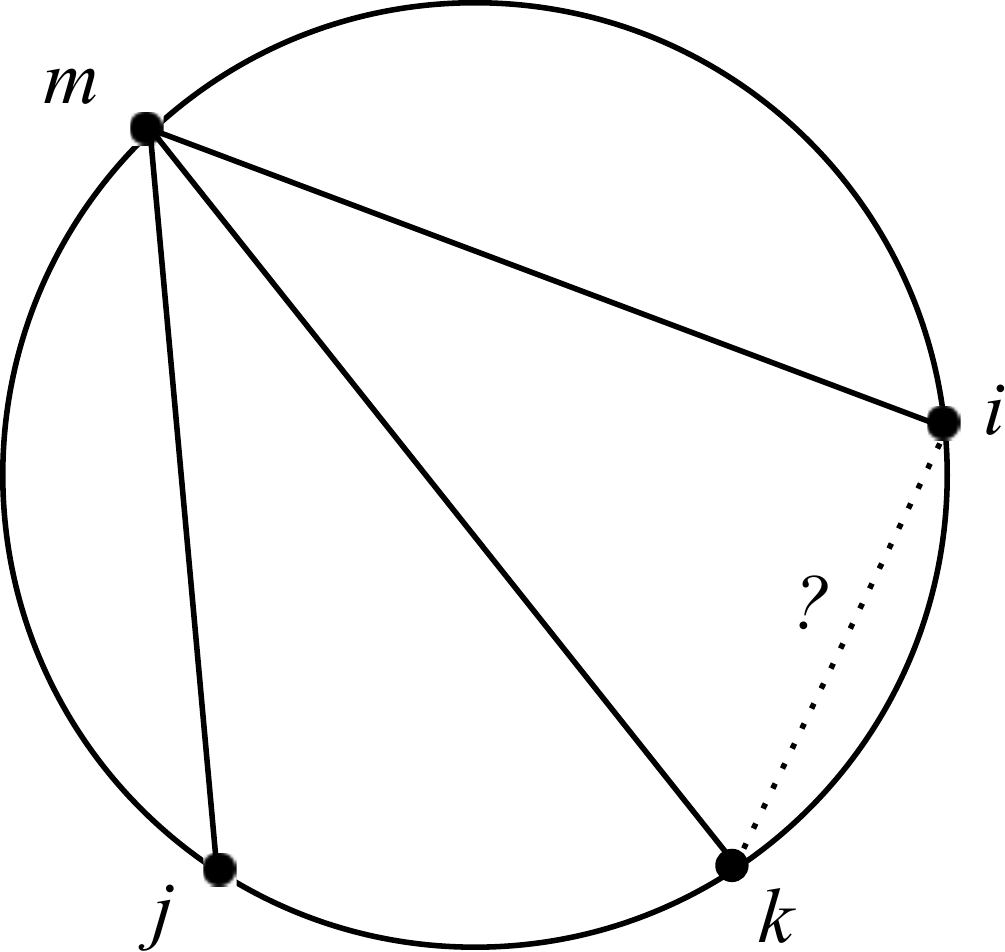}
\end{center}

The situation in Lemma~\ref{completing-half-wedges} is described in the schematic above.
We assume that the big wedge $\{i m, j m\}$ \textbf{is} an edge in $\OG(a,b)$ and that the 
smaller left wedge $\{km, jm\}$ \textbf{is not} an edge in $\OG(a,b)$.   The conclusion is that the dashed segment is an $a,b$-admissible diagonal.

\begin{proof}
The idea is to consider a Dyck path $D$ with valleys at $i, j,$ and $k$.  As in the proof of 
Lemma~\ref{completing-wedges}, we will get that $i k$ appears as a laser diagonal, and is therefore
admissible.
More precisely, let $D$ be the following Dyck path.

\begin{center}
\includegraphics[scale = 0.7]{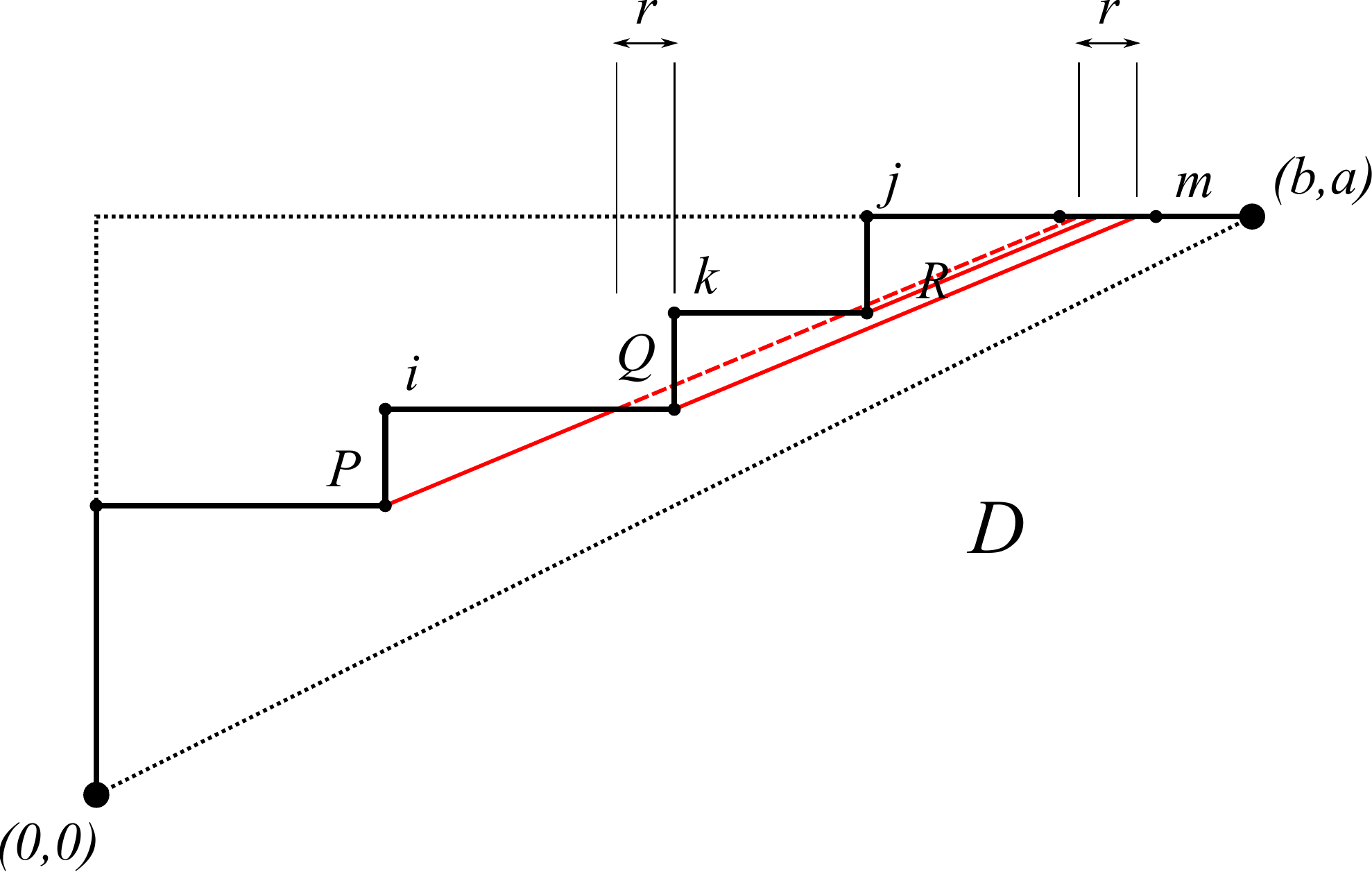}
\end{center}

The valleys $P, Q,$ and $R$ lie on the vertical lines $x = i$, $x = j$, and $x = k$, respectively.  
The point $R$ is chosen so that $d(R) = jm$.  The point $Q$ is chosen so that $d(Q) = km$.  (We know
that $\ell(Q)$ does not hit $D$ in the horizontal run to the left of $R$ because $\{km, jm\}$ is not
an edge of $\OG(a,b)$.)  Finally, the point $P$ is chosen so that the `extended laser' from $P$
(shown here as a dashed line) hits the line $y = a$ at the lattice step whose right endpoint
has $x$-coordinate $m$.  

Since $\{i m, jm \}$ is an edge of $\OG(a,b)$, we know that 
$\ell(P)$ must hit $D$ strictly below the line $y = a$.  Since lasers are parallel, this forces $\ell(P)$ to hit
$D$ on a step of the horizontal run to the left of $Q$.  We claim that the right endpoint of this step is $Q$.
So see this, let $r$ denote the total distance along the line $y = a$ between the `extended' version of the laser
$\ell(P)$ and the laser $\ell(R)$.  Since 
the lasers $\ell(Q)$, $\ell(R)$, and the extended version of $\ell(P)$
intersect the horizontal line $y = a$ at a lattice step whose right
endpoint has $x$-coordinate $m$, we have $r < 1$.  But $r$ is also the distance 
along the horizontal line going through $Q$
between the point where
the unextended laser
$\ell(P)$ hits $D$ and $Q$.  This implies that $\ell(P)$ hits $D$ at the lattice step with left endpoint $Q$
and that $ik$ is $a,b$-admissible. 

To see that $\{i m, km\}$ is an edge of $\OG(a,b)$, consider the path $D'$ whose unique valleys are at 
the lattice points $P$ and $Q$.  
That is, the path $D'$ looks like the path $D$ above with the valley at $R$ `folded out'.
The laser $\ell(P)$ hits $D'$ in the horizontal run 
to the left of $Q$.  By the path building algorithm, we conclude that 
$\{i m, km\}$ is an edge of $\OG(a,b)$.
\end{proof}

\section{Collapsing}
\label{Collapsing}

In this section we describe how to collapse the complex $\Asshat(a,b)$ onto the complex $\Ass(a,b)$.  
By Proposition~\ref{flag}, we know that a face $F$ of $\Asshat(a,b)$ is also a face of $\Ass(a,b)$ if and only 
if $F$ does not contain any edges of the obstruction graph $\OG(a,b)$.  A first approximation of
our collapse is performed by putting a total order on the edges
of $\OG(a,b)$ and repeatedly using Lemma~\ref{cone-vertex-lemma} to collapse all faces 
$F$ of $\Asshat(a,b)$ containing a given edge but none of the previous edges.  However, it will
turn out that some of these `subcollapses' indexed by edges of $\OG(a,b)$ must be further
broken up as a sequence of smaller collapses when the wedges on $\PP_{b+1}$ corresponding to the 
edges of $\OG(a,b)$ are `too wide'.

We begin by describing our total order on the edges of $\OG(a,b)$.  This is essentially an
iterated version of lexicographical order on the $a,b$-admissible diagonals.

\begin{defn} 
Given two $a,b$-admissible diagonals $d = i  j$ and $d' = i'  j'$, we say that $d \sqsubseteq d'$
if $j < j'$ or if $j = j'$ and $i \leq i'$.  Given two edges $e_1 = \{d_1 \sqsubset d'_1\}$ and 
$e_2 = \{d_2 \sqsubset d_2'\}$ of $\OG(a,b)$, we say that $e_1 \preceq e_2$ if  
$d_1 \sqsubset d_2$ or $d_1 = d_2$ and $d'_1 \sqsubseteq d'_2$.
\end{defn}

\begin{example}
\label{orderexample}
If $(a,b) = (5,8)$, the order $\prec$ on the edges of $\OG(a,b)$ is given by
\begin{equation*}
\{04, 24\} \prec \{15, 35\} \prec \{26, 46\} \prec \{07, 27\} \prec \{07, 57\} \prec \{37, 57\} \prec \{18, 38\}
\prec \{18, 68\} \prec \{48, 68\}.
\end{equation*}
\end{example}

Let $\{e_1, \dots, e_N\}$ be the complete set of edges of $\OG(a,b)$ indexed so that
$e_1 \prec \dots \prec e_N$.
The order $\preceq$ can be used to define a family
of simplicial complexes.

\begin{defn}
For $0 \leq r \leq N$, inductively define a family of complexes $\Ass(a,b)_r$ by
$\Ass(a,b)_N := \Asshat(a,b)$ and
$\Ass(a,b)_r := \dl_{\Ass(a,b)_{r+1}}(e_{r+1})$ for $0 \leq r \leq N-1$.
\end{defn}

In other words, we have that 
\begin{equation*}
\Ass(a,b)_r = \{F \in \Asshat(a,b) \,:\, \text{$F$ does not contain any edges $e_s$ of $\OG(a,b)$ for $s > r$} \}
\end{equation*}
We are  ready to prove Theorem~\ref{main}.

\begin{proof} (of Theorem~\ref{main})
By the definition of the complexes $\Ass(a,b)_r$, we have that $\Ass(a,b)_0 = \Ass(a,b)$ and
$\Ass(a,b)_N = \Asshat(a,b)$.  Fix $1 \leq r \leq N$.  To prove that $\Asshat(a,b)$ collapses
onto $\Ass(a,b)$ it is enough to show that $\Ass(a,b)_r$ collapses onto $\Ass(a,b)_{r-1}$.

Let $e_r = \{d \sqsubset d'\}$, where $d = ik$ and $d' = jk$ are the two $a,b$-admissible diagonals
in $\PP_{b+1}$ which form the edge $e_r$.  By the definition of $\sqsubset$ we have $i < j < k$.

Since $e_r$ does not contain any other edges of $\OG(a,b)$, we know that $e_r$ is a face of
$\OG(a,b)_r$.  Moreover, Lemma~\ref{completing-wedges} guarantees that the segment 
$ij$ is an $a,b$-admissible diagonal, so that
$e_r \cup \{ij\} = \{ij, ik, jk\}$ is a face of $\Asshat(a,b)$.  In fact, since the only edge of 
$\OG(a,b)$ contained in $e_r \cup \{ij\}$ is $e_r$, we have that
$e_r \cup \{ij\}$ is a face of $\Ass(a,b)_r$.  It is tempting to hope that 
$ij$ is a cone vertex for $e_r$ in $\Ass(a,b)_r$, so that we could apply Lemma~\ref{cone-vertex-lemma}
to collapse $\Ass(a,b)_r$ onto $\dl_{\Ass(a,b)_r}(e_r) = \Ass(a,b)_{r-1}$.  Unfortunately, the diagonal
$ij$ is not necessarily a cone vertex for $e_r$ in $\Ass(a,b)_r$ because there may exist faces
$F$ of $\Ass(a,b)_r$ containing $e_r$ which contain diagonals which cross $ij$.  To get around
this problem, we will have to be more delicate and realize the collapsing of
$\Ass(a,b)_r$ onto $\Ass(a,b)_{r-1}$ as a sequence of smaller collapsings indexed by a carefully
chosen sequence of these `crossing faces' $F$.

Let $F$ be a face of $\Ass(a,b)_r$ containing $e_r$ and suppose that $F$ contains an $a,b$-admissible
diagonal $s m$ which crosses the diagonal $ij$.  Since $F$ contains the wedge
$\{ik, jk\}$ and the diagonals in $F$ do not cross, we have that $m = k$ and $i < s < j$.
If $\{sk, jk\}$ were an edge of $\OG(a,b)$, we would have that
$\{ik, jk\} \prec \{sk, jk\} \subset F$, which contradicts the assumption that
$F$ is a face of $\Ass(a,b)_r$.  We conclude that $\{sk, jk\}$ is not an edge of $\OG(a,b)$.

Let $s_1 < s_2 < \dots < s_p$ be a complete list of the indices $i < s_q < j$ such that
\begin{itemize}
\item the segment $s_q k$ is an $a,b$-admissible diagonal and
\item the pair $\{s_q k, jk\}$ is not an edge of $\OG(a,b)$.
\end{itemize}
It is possible that $p = 0$, in which case the rest of the argument simplifies.  For $1 \leq q \leq p$,
define a face $F_q$ of $\Asshat(a,b)$ by
$F_q = \{ik, s_q k, jk\}$.  For all $1 \leq q \leq p$ the only edge of $\OG(a,b)$
contained in $F_q$ is $e_r$, so $F_q$ is a face of $\Ass(a,b)_r$.
For $1 \leq q \leq p+1$, we define a subcomplex $\Ass(a,b)^{(q)}_r$ of $\Ass(a,b)_r$ by
\begin{align*}
\Ass(a,b)_r^{(q)} &:= \dl_{\Ass(a,b)_r}( \{F_1, F_2, \dots, F_{q-1}\} )  \\ &= 
\{ F \in \Ass(a,b)_r \,:\, \text{$F$ does not contain $F_{q'}$ for any $q' < q$} \}.
\end{align*}
In particular, we have that $\Ass(a,b)_r^{(1)} = \Ass(a,b)_r$ and that any face of
$\Ass(a,b)_r^{(p+1)}$ containing $e_r$ does not contain any of the diagonals 
$s_q k$ for $1 \leq q \leq p$.  Moreover, we have that
$\Ass(a,b)_r^{(q+1)} = \dl_{\Ass(a,b)_r^{(q)}}(F_q)$ for $1 \leq q \leq p$.

Fix $1 \leq q \leq p$.  We claim that $\Ass(a,b)_r^{(q)}$ collapses onto $\Ass(a,b)_r^{(q+1)}$.  
To see this, observe that $F_q = \{ik, s_q k, jk\}$, where $i < s_q < j < k$,
$\{ik, jk\}$ is an edge of $\OG(a,b)$, and
$\{s_q k, jk\}$ is not an edge of $\OG(a,b)$.   Applying Lemma~\ref{completing-half-wedges},
we get that $i s_q$ is  an $a,b$-admissible diagonal in $\PP_{b+1}$.

We claim that the $a,b$-admissible diagonal $i s_q$ is a cone vertex for 
$F_q$ in the complex $\Ass(a,b)_r^{(q)}$.  Since $i s_q \notin F_q$, by the definition of cone 
vertices we need to show that if $F$ is any face of $\Ass(a,b)_r^{(q)}$ containing $F_q$,
we have that $F \cup \{i s_q\}$ is also a face of $\Ass(a,b)_r^{(q)}$.

We begin by showing that $F \cup \{i s_q\}$ is a face of $\Asshat(a,b)$, that is, that the diagonals
in $F \cup \{i s_q\}$ are noncrossing on $\PP_{b+1}$.  Indeed, since $F$ contains
$F_q = \{ik, i s_q, jk\}$, the only $a,b$-admissible diagonals in $F$ which could cross $i s_q$
would be of the form $i s_{q'}$ for $q' < q$.  If any of these diagonals were contained in $F$,
then $F$ would contain $F_{q'} = \{ik, i s_{q'}, jk\}$, which contradicts the assumption that
$F$ is a face of $\Ass(a,b)_r^{(q)}$.  We conclude that the diagonals in $F \cup \{i s_q\}$ are
noncrossing and that $F \cup \{i s_q\}$ is a face of $\Asshat(a,b)$.

We still need to show that $F \cup \{i s_q\}$ is a face of the subcomplex
$\Ass(a,b)_r^{(q)}$ of $\Asshat(a,b)$.  This amounts to showing that $F \cup \{i s_q\}$ does not 
contain 
\begin{enumerate}
\item any edge $e_{r'}$ of $\OG(a,b)$ with $r' > r$, or  
\item any face $F_{q'}$ with
$q' < q$.  
\end{enumerate}
For (1), since $F$ is a face of $\Ass(a,b)_r^{(q)}$, we know that $F$ does not
contain any edge $e_{r'}$ of $\OG(a,b)$ with $r' > r$.  What possible edges of 
$\OG(a,b)$ could be added to $F$ by introducing the diagonal $i s_q$?  Only edges
of the form $e_{r'} = \{i' s_q, i s_q\}$ which belong to the component
$\OG(a,b)_{s_q}$.
But by the definition of $\prec$ and the fact that 
$s_q < k$, we know that any such edge would satisfy $e_{r'} \prec e_r$, so $r' < r$.
This implies (1).  For (2), we make the simple observation that $i s_q$ is not 
of the form $s_{q'} k$, so $F \cup \{i s_q\}$ does not contain $F_{q'}$ for $q' < q$ because $F$
does not contain $F_{q'}$ for $q' < q$.

By the last two paragraphs, we have that $i s_q$ is a cone vertex for $F_q$ in
$\Ass(a,b)^{(q)}_r$.
By Lemma~\ref{cone-vertex-lemma} and induction, we get that 
$\Ass(a,b)_r = \Ass(a,b)^{(1)}$ collapses onto $\Ass(a,b)_r^{(p+1)}$.  To finish the proof of
Theorem~\ref{main}, we will show that $\Ass(a,b)_r^{(p+1)}$ collapses onto
$\Ass(a,b)_{r-1}$.

We begin with the observation that $e_r$ is a face of $\Ass(a,b)_r^{(p+1)}$ and that we have
$\Ass(a,b)_{r-1} = \dl_{\Ass(a,b)_r^{(p+1)}}(e_r)$.  By 
Lemma~\ref{completing-wedges} we get that $ij$ is an $a,b$-admissible diagonal in $\PP_{b+1}$.
We claim that $ij$ is a cone vertex for $e_r$ in $\Ass(a,b)_r^{(p+1)}$.
To see this, let $F$ be any face of $\Ass(a,b)_r^{(p+1)}$ containing $e_r$.  Since
$ij \notin e_r$, it suffices to show that
$F \cup \{ij\}$ is also a face of $\Ass(a,b)_r^{(p+1)}$.

As before, we begin by showing that $F \cup \{ij\}$ is a face of $\Asshat(a,b)$, i.e., that
the diagonals in $F \cup \{ij\}$ do not cross.  Indeed, since $F$ contains the wedge
$e_r = \{ik, jk\}$, the only diagonals in $F$ which could cross $ij$ would be 
$a,b$-admissible diagonals of the form $sk$ for $i < s < j$.  But since $F$ is a face 
of $\Ass(a,b)_r^{(p+1)} \subseteq \Ass(a,b)_r$, the definition of $\prec$ would force
the pair $\{sk, jk\}$ to be a non-edge in $\OG(a,b)$.  This means that $s = s_q$ for some 
$1 \leq q \leq p$ and $F$ would contain
$F_q = \{ik, s_qk, jk\}$.  But this contradicts that fact that
$F$ is a face of $\Ass(a,b)_r^{(p+1)}$.  We conclude that none of the diagonals in $F$
cross $ij$ and $F \cup \{ij\}$ is a face of $\Asshat(a,b)$.

We still need to show that $F \cup \{ij\}$ is a face of the subcomplex
$\Ass(a,b)_r^{(p+1)}$ of $\Asshat(a,b)$.  This amounts to showing that 
$F \cup \{ij\}$ does not contain 
\begin{enumerate}
\item
any edge $e_{r'}$ of $\OG(a,b)$ with $r' > r$, or
\item any face $F_q$ for $1 \leq q \leq p$. 
\end{enumerate} 
For (1), observe that the addition of $ij$ to $F$ can only 
add edges of $\OG(a,b)$ of the form $\{i'j, ij\}$
which belong to the component $\OG(a,b)_j$ and since $j < k$
any such edge is $\prec e_r$.  For (2),
observe that $ij \notin F_q$ for $1 \leq q \leq p$.

By the last two paragraphs, we have that $ij$ is a cone vertex for $e_r$ in
$\Ass(a,b)_r^{(p+1)}$.  By Lemma~\ref{cone-vertex-lemma}, we have that
$\Ass(a,b)_r^{(p+1)}$ collapses onto $\Ass(a,b)_{r-1}$.  We already showed that
$\Ass(a,b)_r$ collapses onto $\Ass(a,b)_r^{(p+1)}$, so we have the overall 
collapse of $\Ass(a,b)_r$ onto $\Ass(a,b)_{r-1}$.  By induction, we have that
$\Asshat(a,b) = \Ass(a,b)_N$ collapses onto $\Ass(a,b) = \Ass(a,b)_0$.  This completes
the proof of Theorem~\ref{main}.
\end{proof}

\begin{example}
\label{collapseexample}
To better understand the argument used in the proof of Theorem~\ref{main}, we show how to collapse
$\Asshat(5,8)$ onto $\Ass(5,8)$.

The obstruction graph $\OG(5,8)$ has $N = 9$ edges and the order presented in 
Example~\ref{orderexample} labels these edges as $e_1 \prec \dots \prec e_9$.  We have that
$\Ass(5,8)_9 = \Asshat(5,8)$ and $\Ass(5,8)_0 = \Ass(5,8)$.  

To show that $\Ass(5,8)_9$ collapses onto $\Ass(5,8)_8$, we use the following diagram.
\begin{center}
\includegraphics[scale = 0.5]{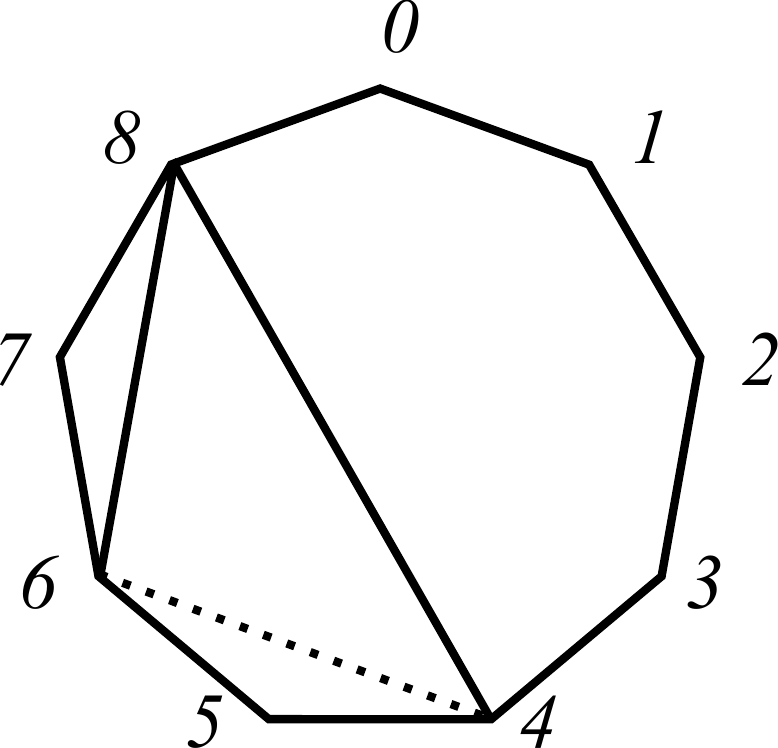}
\end{center}
The diagonals in $e_9 = \{48, 68\}$ are solid.  The dotted diagonal $46$ (which is guaranteed
to be $5,8$-admissible by Lemma~\ref{completing-wedges}) is a cone vertex for 
$e_9$ in $\Ass(5,8)_9$ because no face of $\Ass(5,8)_9$ containing $e_9$ contains a diagonal
which crosses $46$.  Applying Lemma~\ref{cone-vertex-lemma}, we see that 
$\Ass(5,8)_9$ collapses onto $\Ass(5,8)_8$.

The argument that $\Ass(5,8)_8$ collapses onto $\Ass(5,8)_7$ is more complicated.  If we try to use 
the above strategy directly, we encounter the following diagram.
\begin{center}
\includegraphics[scale = 0.5]{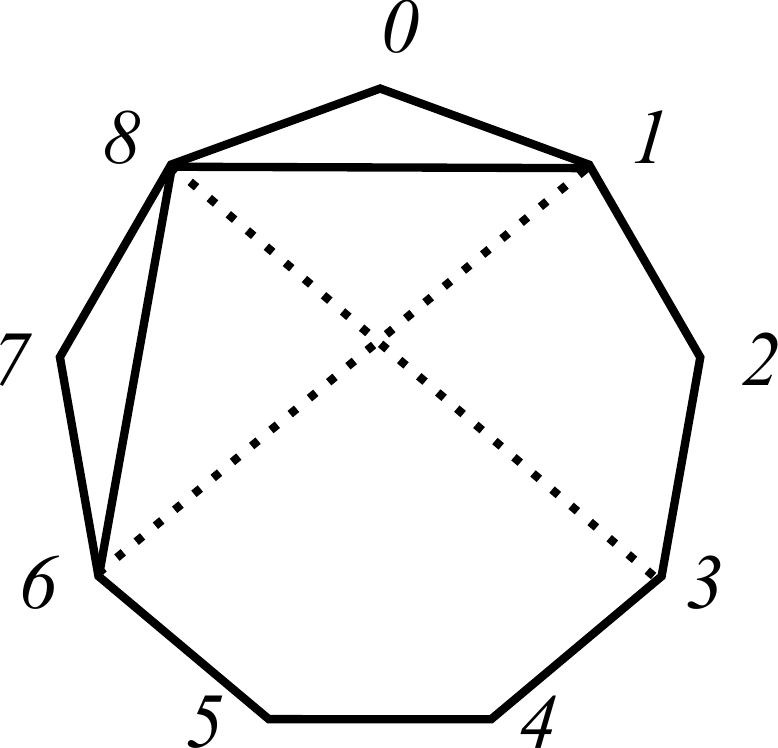}
\end{center}
The diagonals in $e_8 = \{18, 68\}$ are solid.  Lemma~\ref{completing-wedges} guarantees that
the dotted diagonal $16$ is $5,8$-admissible.  However, the diagonal $16$ is not a cone vertex
for $e_8$ in $\Ass(5,8)_8$ because it crosses the diagonal $38$.  (The diagonal $48$ is not 
a problem because any face of $\Ass(5,8)_8$ containing $e_8$ cannot contain $48$, as such
a face would then contain $e_9$.  In other words, the diagonal $48$ forms an obstructing
edge with the diagonal $68$.)  

To get around this problem, we define a face $F_1$ of $\Ass(5,8)_8$ by 
$F_1 := \{18, 38, 68\}$.  We have the following diagram.
\begin{center}
\includegraphics[scale = 0.5]{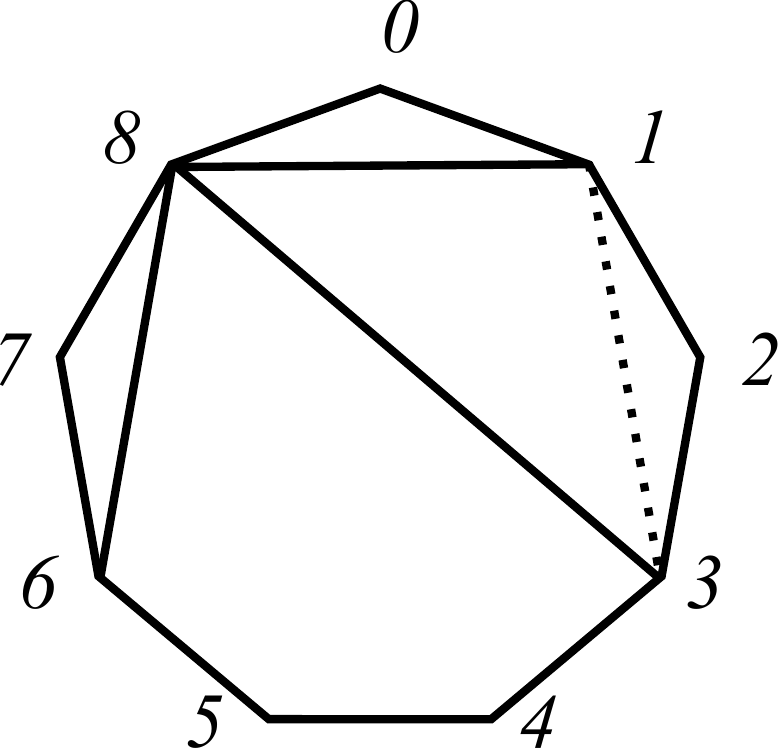}
\end{center}
The diagonals in $F_1$ are solid.  Lemma~\ref{completing-half-wedges} guarantees that 
the dotted diagonal $13$ is $5,8$-admissible and it is the case that $13$ is a cone vertex for $F_1$ in
$\Ass(5,8)_8$.  By Lemma~\ref{cone-vertex-lemma}, the complex $\Ass(5,8)_8 = \Ass(5,8)_8^{(1)}$ collapses 
onto the complex $\dl_{\Ass(5,8)_8}(F_1) = \Ass(5,8)_8^{(2)}$.

Working inside the complex $\Ass(5,8)_8^{(2)}$, we can revert to our previous argument.  We have the following
diagram.
\begin{center}
\includegraphics[scale = 0.5]{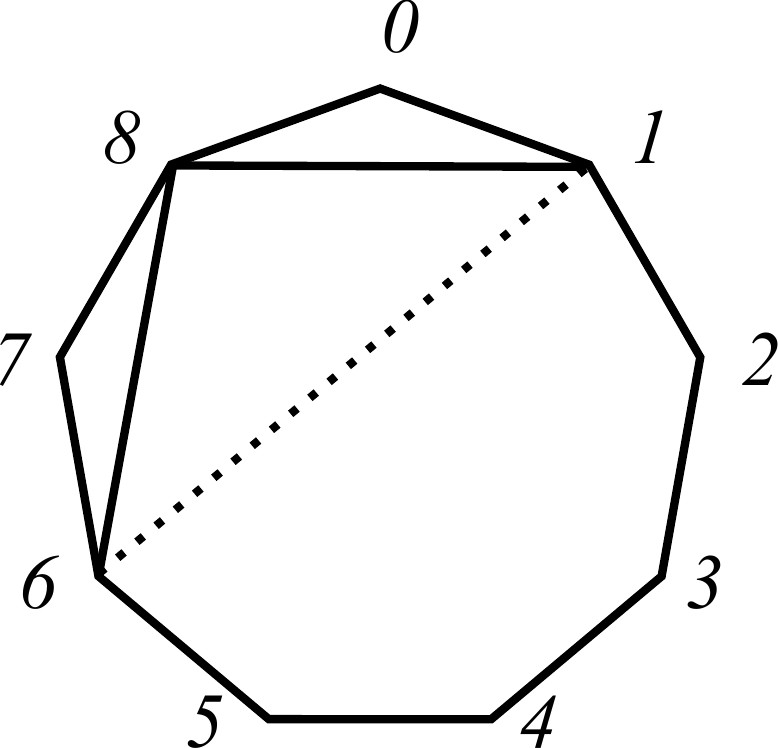}
\end{center}
The diagonal $38$ is no longer present because no face of $\Ass(5,8)_8^{(2)}$ contains $F_1$.  We can apply
Lemmas~\ref{completing-wedges} to conclude that  $16$ is $5,8$-admissible and
observe that $16$ is a cone vertex
for $e_8$ in $\Ass(5,8)_8^{(2)}$.  We conclude that $\Ass(5,8)_8^{(2)}$ collapses onto $\Ass(5,8)_7$.

The proof that $\Ass(5,8)_7$ collapses onto $\Ass(5,8)_6$ is easier and relies on the following diagram.
\begin{center}
\includegraphics[scale = 0.5]{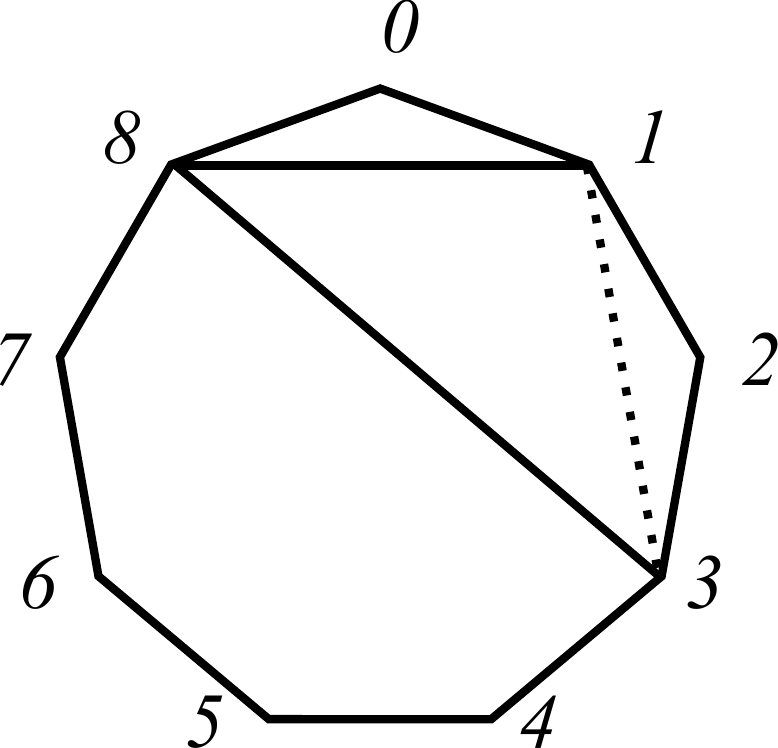}
\end{center}
Since the obstructing edge $e_7$ forms a sufficiently narrow wedge $\{18, 38\}$ in $\PP_9$, there is no room
for a $5,8$-admissible diagonal to get in the way of $13$ being a cone vertex.

At this point, we have collapsed all the faces of $\Asshat(5,8)$ which contain an obstructing edge involving
the wedge boundary point $8$.  We move along to those obstructing edges involving the wedge 
boundary point $7$.  
This process begins by considering the following diagram to give the collapse of $\Ass(5,8)_6$ onto
$\Ass(5,8)_5$.
\begin{center}
\includegraphics[scale = 0.5]{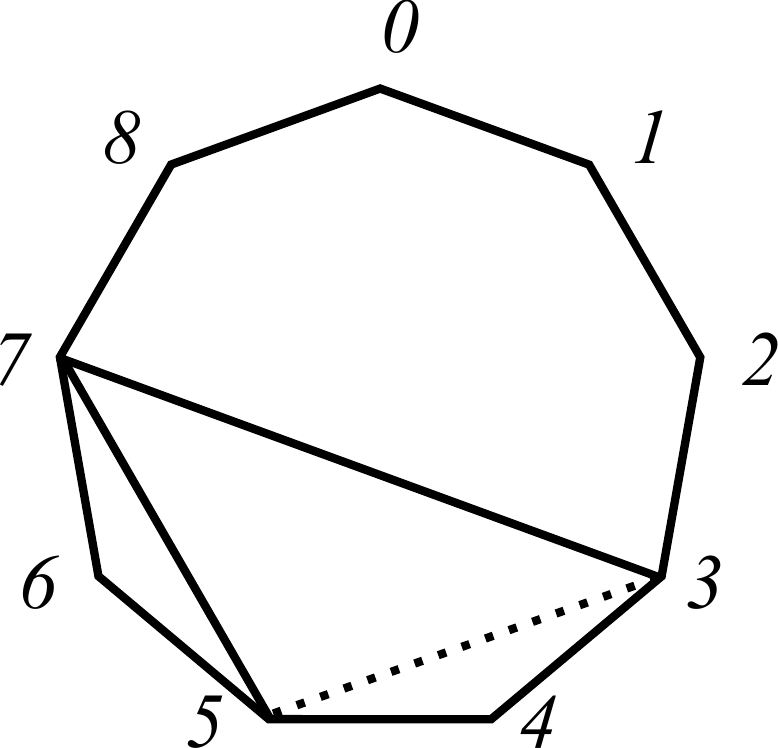}
\end{center}
The dotted vertex $35$ plays the role of the cone vertex and we repeat the sequence of arguments above,
rotated one boundary point counterclockwise.  
The collapses corresponding to the obstructing edges with wedge boundary points $6,5,4$ are easier
and complete the collapsing of $\Asshat(5,8)$ onto $\Ass(5,8)$.
Observe that the dotted diagonals considered when collapsing
from $\Ass(5,8)_r$ to $\Ass(5,8)_{r-1}$ never create an obstructing edge with index $> r$, so really
are cone vertices inside the complex $\Ass(5,8)_r$.
\end{example}

We reiterate the deduction of Corollary~\ref{alexander} from Theorem~\ref{main}
presented in \cite{ARW}.

\begin{proof} (of Corollary~\ref{alexander})
An argument using the coprimality of $a$ and $b$ and the sets $S(a,b)$ and $S(b-a,b)$ shows that 
any diagonal of $\PP_{b+1}$ is either $a,b$-admissible or $(b-a,b)$-admissible, but not both.
In other words, the vertex sets of $\Asshat(a,b)$ and $\Asshat(b-a,b)$ partition the 
vertex set of the simplicial sphere $\Ass(b-1,b)$.  By the definition of $\Asshat$, a subset $F$
of $\Asshat(a,b)$ is a face of $\Asshat(a,b)$ if and only if $F$ is also a face of 
$\Ass(b-1,b)$ and similarly for $\Asshat(b-a,b)$.  This means that the complement
of $\Asshat(a,b)$ within $\Ass(b-1,b)$ deformation retracts onto $\Asshat(b-a,b)$. 
In other words, we have that $\Asshat(a,b)$ and $\Asshat(b-a,b)$ are Alexander dual
within the sphere $\Ass(b-1,b)$.  By Theorem~\ref{main}, we also have that
$\Ass(a,b)$ and $\Ass(b-a,b)$  are  Alexander dual within the sphere $\Ass(b-1,b)$. 
\end{proof}

\section{Closing remarks}
\label{Open}

The construction $(a, b) \leadsto \Ass(a,b)$ attaches a simplicial complex to every pair of coprime
positive integers $a < b$.
In this paper we proved that the duality 
$(a, b) \longleftrightarrow (b-a,b)$ on pairs of coprime integers manifests itself topologically 
as Alexander duality of simplicial complexes.  

Any increasing pair of coprime positive integers an be obtained from $(1, 2)$ by repeated applications
of the duality $(a,b) \mapsto (b-a, b)$ and the map $(a, b) \mapsto (b, 2b-a)$.  Alternatively, we could
use repeated applications of the duality $(a,b) \mapsto (b-a,b)$ and the map 
$(a,b) \mapsto (a, a+b)$.  It may be interesting to find a topological manifestation 
of the maps $(a,b) \mapsto (b, 2b-a)$ and $(a,b) \mapsto (a, a+b)$ in terms of rational 
associahedra.  These maps are closely related to the the monoid endomorphisms
of the set $\{x, y\}^*$ of words on the two-letter alphabet $\{x, y\}$ which generate the 
so-called `Christoffel morphisms' (see \cite{BLRS}).  In the notation of \cite{BLRS}, the 
duality $(a,b) \mapsto (b-a,b)$ corresponds to the morphism \textbf{E}, the 
map $(a,b) \mapsto (b, 2b-a)$ corresponds to the morphism $\widetilde{\textbf{D}}$, and the map
$(a,b) \mapsto (a, a+b)$ corresponds to the morphism \textbf{G}.  Since Christoffel words can be
identified with a subset of
rational Dyck paths, it may be interesting to develop a deeper connection between Christoffel theory
and rational associahedra.

The main topological tool used in proving Theorem~\ref{main} was the elementary result of
Lemma~\ref{cone-vertex-lemma}.  Discrete Morse Theory was introduced by Forman \cite{Forman}
and is a powerful and ubiquitous tool in geometric combinatorics
for proving deformation retraction type results.  More precisely, consider the 
\textbf{\textit{face poset}} $\widehat{P}$ of $\Asshat(a,b)$ (that is, the set of faces of $\Asshat(a,b)$ partially ordered
by inclusion).  The poset $\widehat{P}$ contains the face poset $P$ of $\Ass(a,b)$ as an order ideal.
The collapse of $\Asshat(a,b)$ onto $\Ass(a,b)$ in Theorem~\ref{main} gives a perfect matching
on the Hasse diagram of the difference $\widehat{P} - P$.
It follows that this perfect matching is Morse (see \cite[Chapter 11]{Koslov} for the definition of 
a Morse matching).  Conversely, any Morse matching on $\widehat{P} - P$ gives rise to a collapse
of $\Asshat(a,b)$ onto $\Ass(a,b)$.  It may be interesting to prove Theorem~\ref{main} by exhibiting a 
Morse matching on $\widehat{P} - P$.

It remains an open problem to extend most of the constructions of rational Catalan theory to reflection
groups $W$ other than the symmetric group $\symm_n$.  The $W$-analog of the associahedron 
is given by the \textbf{\textit{cluster complexes}} of Fomin and Zelevinsky \cite{FZ} and a 
Fu$\ss$ analog of this construction is given by the \textbf{\textit{generalized cluster complexes}} of 
Fomin and Reading \cite{FR}.  A rational analog of cluster complexes would have as its input data a pair
$(W, b)$, where $W$ is an irreducible
reflection group with Coxeter number $h$ and $b > h$ is an integer coprime to $h$.
The `W-rational associahedron' $\Ass(W,b)$ should 
be pure of dimension $h-2$, have $\Cat(W, b) := \prod_{i = 1}^n \frac{e_i + b}{e_i + 1}$ facets (where
$e_1, \dots, e_n$ are the exponents of $W$), and have $f$- and $h$-vector entries given by
rational analogs of the $W$-Kirkman and Narayana numbers.  

Unfortunately, it does not seem possible to extend the Alexander duality proven in this paper to the 
full generality of `type $W$'.  Such a duality would exist between $\Ass(W,b)$ and $\Ass(W',b)$, where
$W'$ is a reflection group with Coxeter number $b-h$.  Indeed, within the BCD infinite series, the only possible
choices for $h$ are even, forcing both $b$ and $b-h$ to be odd.

\section{Acknowledgements}
\label{Acknowledgements}

The author thanks Drew Armstrong, Benjamin Braun, Steven Klee, and Nathan Williams for helpful conversations.
The author was partially supported by NSF grant DMS - 1068861.

\end{document}